\newcommand{\R}{\ensuremath{\mathbb{R}}}
\newcommand\PP{{\mathbb P}}
 \newcommand{\eps}{\varepsilon}
 \newtheorem{theorem}{Theorem}[section]
 \newtheorem{lemma}[theorem]{Lemma}
 \newtheorem{propo}[theorem]{Proposition}
 \newtheorem{corollary}[theorem]{Corollary}
\title{Large deviations and path properties of the true self-repelling motion}
\author{Laure Dumaz\footnote {Ecole Normale Sup\'erieure, Universit\'e Paris-Sud and TU Budapest -- Support from the Balaton/PHC grant 19482NA is acknowledged.}}
\begin{document}

\maketitle

\begin {abstract}
 We derive some large deviation bounds for events related to the ``true self-repelling motion'', a one-dimensional self-interacting process introduced by 
T\'oth and Werner, that has very different path properties than usual diffusion processes. We then use these estimates to study certain of these path properties 
such as its law of iterated logarithms for both small and large times.
\end {abstract}

\section{Introduction}
In the present paper, we study some features of a self-interacting one-dimensional process called the true self-repelling motion, defined by T\'oth and Werner in 
\cite {TothWerner}. Let us first very briefly recall the intuitive definition of this process and describe the motivations that lead to our study. 

The true self-repelling motion is a continuous real-valued process $(X_t, t \ge 0)$ that is locally self-interacting with its past occupation-time. 
More precisely, for each positive time $t$, define its occupation-time measure $\mu_t$ that assigns to each interval $I \subset \R$, the time spent in it by $X$ before 
time $t$:
$$ \mu_t (I ) = \int_0^t 1_{X_s \in I}  \ ds .$$
It turns out that for this particular process $X$, almost surely for each $t$, the measure $\mu_t$ has a continuous density $L_t (x)$.
By analogy with semi-martingales, where such occupation-time densities also exist, the curve $x \mapsto L_t (x)$ is called the ``local-time'' profile of $X$ at time $t$. 
Heuristically, the dynamics of $X_t$ is such that the TSRM is locally pushed in the direction of the negative ``gradient'' of the local time at its current position.
Loosely formulated, one can write $dX_t = - \nabla_x L_t (X_t) dt$ (even if $(X_t, t \ge 0)$ is a random process).  
For more details and comments on this description, we refer to \cite {TothWerner}.
It turns out that this process is of a very different type than diffusions. For example (see again \cite {TothWerner}), its quadratic variation almost surely vanishes whereas its variation of power $3/2$ is positive and finite. Similarly, it does not have the Brownian scaling property, it has instead a $2/3$ scaling behavior i.e., for any positive $\lambda$, 
$ (X_{\lambda t}, t \ge 0) $ has the same law as $(\lambda^{2/3} X_t, t \ge 0)$. 

This same exponent $2/3$ appears in various other models that can be interpreted as continuous height-fluctuations of $1+1$-dimensional models 
in the Khardar-Parisi-Zhang universality class (such as the Tracy-Widom distribution for eigenvalues of large random matrices, the movement of the second-class particle in a TASEP etc.).
TSRM seems however at present to be one of the few such ``non-diffusive'' continuous processes that probabilists can define (see also 
\cite {ETW} for related questions).
All this gives us some motivation to study in more detail its behavior, in order to see what features it shares with the 
other previously-mentioned models, and also for its own independent interest. 

Let us now describe briefly the results of the present paper: 
Both for the process $(X_t, t \ge 0)$ itself as for the height process $(H_t, t \ge 0)$, we give upper and lower bounds for 
the probability that their value at a given time is very large. Combined with $0-1$-law arguments, this enables us 
to derive almost sure fluctuation results  (of the type of the law of the iterated logarithm) for these two processes. For instance, we shall see that 
$\limsup_{t \to \infty} X_t / (t^{2/3} (\log \log t)^{1/3})$ is almost surely equal to a finite  positive constant, and a similar result when $t \to 0$.

The construction of the process $X_t$ is based on a family of coalescing one-dimensional Brownian motions starting from all points in the plane. 
Such families had been constructed by Arratia in \cite {Arratia}, and further studied in \cite {TothWerner, STW, NewmanBW, NewmanBW2} 
and are called ``Brownian web'' in the latter papers. 
As a consequence, the estimates on the TSRM will follow from results concerning this Brownian web. In Section \ref {S.2}, 
we will recall some aspects of the construction of TSRM and some features of the Brownian web. In Section \ref {S.3}, 
we will focus on the large deviation estimates concerning $X_1$, we then derive the LIL for $X$ in Section \ref{S.4}, and we finally focus on 
the fluctuations of the height-process in the final Section \ref {S.5}.

\medbreak
\noindent \textbf{Acknowledgement:} I am grateful to my supervisors B\'alint T\'oth and Wendelin Werner for their guidance throughout this work. Special thanks go to Wendelin Werner for his careful reading of successive versions of this paper, and to the referees for their insightful comments. 

\section{Preliminaries and notations}\label{sec:preliminariesandnotations} \label {S.2}

In this section, we put down some notation, and collect some elementary estimates that will be useful later on.

\subsection{Versions of the Brownian web}

  The true self-repelling motion (TSRM) is a deterministic function of a certain family of coalescing one-dimensional Brownian motions.
 There are two natural variants of TSRM, that respectively correspond to such Brownian families in the entire plane (this is the ``stationary'' TSRM, this version has stationary increments) or in the upper half-plane (this is the TSRM with ``zero-initial conditions''). Other initial conditions are also possible, see Section 4 of \cite{STW} for examples.
   
   Let us briefly first recall the construction in the \textbf{stationary} case which will be the main focus of this paper.
 To start with, choose any deterministic countable dense family $Q$ of points $(\tilde{x},\tilde{h})$ in the plane, say $Q = \mathbb{Q}^2$. 
It is then possible to define 
the joint law of a family  $(\Lambda_{\tilde{x},\tilde h} ( \cdot), (\tilde{x},\tilde{h}) \in Q)$ in such a way that, for each $(\tilde{x},\tilde{h}) \in Q$,
 $\Lambda_{\tilde{x},\tilde{h}}$ is a function from $[ \tilde x, \infty)$ into $\R$, that is distributed like a Brownian motion started from height 
$\tilde{h}$ at time $\tilde{x}$. Furthermore (see e.g. \cite {TothWerner} for details), different curves are ``independent until their 
first meeting time'' and they coalesce after this meeting time (and follow the same Brownian evolution). 
Recall that $Q$ is dense in the plane, so that the picture of all these lines is dense in the plane. The coalescent structure nevertheless defines 
a tree-like structure rooted ``at $x=+ \infty$''. 
This family of curves $\Lambda$ is often referred to as the ``forward lines''. 

If we are given a countable dense family $\tilde Q$ in the plane, then one can almost surely define the family of ``backward'' lines 
$(\Lambda_{\tilde{x},\tilde{h}} (\cdot), (\tilde x, \tilde h) \in  \tilde Q)$ such that each $\Lambda_{\tilde x, \tilde h}$ is now a 
function defined on $(-\infty, \tilde x]$ in such a way that the backward lines can be viewed as the ``dual tree'' of the previous dense tree 
(it is therefore a deterministic function of all forward lines). It is proved in \cite{TothWerner} that this family of backward lines has the same 
law as the reversed image (changing $x$ into $-x$) of the law of the forward lines (choosing $\tilde Q$ to be the symmetric image of $Q$).

There is an alternative construction where one does not have to first define the whole dense family of forward lines to construct the backward ones: 
instead, one can construct the forward and the backward paths one by one for each $(\tilde{x},\tilde{h}) \in Q$ inductively, 
applying a reflection/coalescence rule explained in the section 3.1.4 of \cite{STW}. 
Roughly, the rule is that when two curves meet, there is coalescence if they are of the same type (both backward or both forward), and otherwise, the two curves are ``reflected on each other''. Note that the proofs in \cite{STW} use the discrete model (with reflecting/coalescing random walks) and an invariance principle.
   
Both constructions define (for each $Q$) a family of curves $\Lambda_{\tilde x, \tilde h} (\cdot)$ (from $\R$ to $\R$) indexed by $(\tilde x, \tilde h) \in Q$,  such that 
for each $(\tilde x, \tilde h)$ in $Q$, $\Lambda_{\tilde x, \tilde h}(\tilde x) = \tilde h$ almost surely.
It is then natural to wonder whether there exists certain ``versions'' of the process $(\Lambda_{x, h},\; (x, h) \in \R^2)$, 
defined simultaneously for all points $(x, h)$ in the plane, with some additional regularity properties. 
It turns out that the situation is reminiscent of that of 
real-valued L\'evy processes, where one can choose a right-continuous or a left-continuous version, except that time is here replaced by the $h$-variable.   

 In \cite{TothWerner}, the authors choose to define the forward line starting at $(x,h) \in \R^2$ denoted $\Lambda_{x,h}(y), y \ge x$ by taking the supremum of all $\Lambda_{\tilde{x}, \tilde{h}}(y)$ over the countable family of lines
   $$\{(\tilde{x},\tilde{h}) \in Q \;:\; \tilde{x} < x, \;\Lambda_{\tilde{x},\tilde{h}}(x) < h\}$$ that is to say over the lines in the countable family that are starting before $x$ and passing below $h$ at time $x$. Their Theorem 2.1 states that this family $\Lambda$ then verifies:
   \begin{itemize}
    \item for any finite set $(x_1,h_1), \cdots, (x_n,h_n) \in \R^2$, a.s. $(\Lambda_{x_i,h_i},\;i \in \{1,\cdots,n\})$ is distributed as independent coalescing Brownian motions,
    \item a.s., for all $(x,h) \in \R^2$, $\Lambda_{x,h}(x) = h$,
    \item a.s., for all $(x_1,h_1), (x_2,h_2) \in \R^2$,  $\Lambda_{x_1,h_1}$ and  $\Lambda_{x_2,h_2}$ do not cross each other,
    \item a.s., for all $x < y$, the mapping $h \mapsto \Lambda_{x,h}(y)$ is left-continuous,
   \end{itemize}
and that those four properties characterize its distribution. Note that the first one tells us that the choice of $Q$ does not change the distribution of $\Lambda$. The last ``left-continuity'' means that for those $(x,h)$ where there might be some choice, one chooses the lowest one. 
   Throughout our paper, the notation $(\Lambda_{x,h})_{(x,h) \in \R^2}$ corresponds to this version of the coalescing family.

 Clearly, there is another natural choice, that one can obtain by considering the symmetric picture (upwards down) i.e. to define 
$$\Lambda^+_{x,h} = \inf\{\Lambda_{\tilde{x},\tilde{h}}(y), \;\tilde{x} < x, \;\Lambda_{\tilde{x},\tilde{h}}(x) > h, \;(\tilde{x},\tilde{h}) \in Q\}.$$ 
This family $\Lambda^+$ verifies the same properties as $\Lambda$, except that left-continuity with respect to $h$ is replaced by right-continuity.
   
   Another option proposed by Fontes, Isopi, Newman, and Ravishankar in \cite{NewmanBW} is to define a metric on a natural space on which the 
coalescing family lives and to consider the closure of $(\Lambda_{x,h}(y), y \ge x\; ;\; (x,h) \in Q)$ in this topological space. Note that you can 
now have more than one curve starting from certain (exceptional) points. In fact, the curves of the families $\Lambda$ and $\Lambda^+$ correspond 
to the two extremal choices for the curves of their family. This construction is useful in order to state the convergence of the discrete model 
with coalescing random walks towards the coalescing Brownian motions. The family is called in their paper Brownian Web (Double Brownian Web if 
you add the backward lines). By a slight abuse of terminology, we will just call our family $(\Lambda_{x,h}(y), \;y \in \R \;;\; (x,h) \in \R^2)$ ``Brownian Web'' (BW).

\subsection {TSRM and the Brownian web}\label{subsec:TSRMandBW}
   
   The intuitive link between the TSRM and the BW goes as follows: Let us consider the process $(X_t,H_t)$ started at $(0,0)$ which traces 
the contour of the ``forward tree'' moving upwards, that is to say above $\Lambda_{0,0}$ and towards $+\infty$. It is in fact the same contour as that of the ``backward tree''. This process visits all 
the points above the curve $\Lambda_{0,0}$ (it is plane-filling). The time-parametrization will be chosen in such a way that the area swept by $(X,H)$ during the interval $[0,t]$ is exactly $t$ and its first coordinate $X$ will be the ``true'' self-repelling motion. 

In order to be more precise, we need some additional notations. For each $(x,h) \in \R^2$, let $S_{x,h}$ denotes the (algebraic) area 
between $\Lambda_{x,h}$ and $\Lambda_{0,0}$:
   $$S_{x,h} := \int_{-\infty}^{+\infty} (\Lambda_{x,h}(y) - \Lambda_{0,0}(y)) \, dy.$$
   Almost surely, for every $(x,h)$ above the initial curve $\Lambda_{0,0}$, the process $(X,H)$ is equal to $(x,h)$ at the random time $S_{x,h}$ 
and has visited all the points between 
$\Lambda_{x,h}$ and $\Lambda_{0,0}$.  T\'oth and Werner proved that this indeed defines a continuous process $(X_t, H_t)_{t \ge 0}$ (see Lemma 3.4 of 
\cite{TothWerner}). Thanks to the Brownian structure of the tree and the correspondence between area in the tree and time for the process, one can then easily 
deduce basic properties for $(X,H)$ such as the recurrence of $X$ in $\R$, or the scaling property: for every $a > 0$, $(X_{a t},H_{a t})_{t \ge 0}$ and 
   $(a^{2/3} X_{t}, a^{1/3} H_{t})_{t \ge 0}$ are identical in law (Proposition 3.5 of \cite{TothWerner}).
   
   \medskip
   
   Another important observation is that together with the initial profile $\Lambda_{0,0}$, the first coordinate $X$ contains enough information in order to recover
both the process $H$ and the upper part of the BW $(\Lambda_{x,h},\; x \in \R, \,h \ge \Lambda_{0,0}(x))$.
 Indeed, as we already mentioned in the introduction, the occupation-time measure of $X$ turns out (for each time $t$) to have a continuous density 
with respect to Lebesgue measure, denoted by $L_t(\cdot)$. Moreover, the definition of $(X,H)$ readily shows that when $t = S_{x,h}$, then 
$$ \Lambda_{0, 0} ( \cdot ) + L_t ( \cdot ) = \Lambda_{x,h} ( \cdot )$$
i.e. that the random area $S_{x,h}$ corresponds to the first time $t$ at which the local time at $x$, $L_t(x)$, reaches the level 
$h - \Lambda_{0,0} (x)$, and the curve of the BW from $\Lambda_{x,h} - \Lambda_{0,0}$ is the local time curve at $S_{x,h}$. It is a stronger analog to Ray-Knight Theorems for Brownian motion.

\bigskip
   
  For each fixed (deterministic) $x \in \R$, we will denote by $\sigma_x$ the first hitting time of $x$ by the TSRM $X$. 
It is easy to see that $a.s$, this time equals the infimum of the set of times at which  $L_{\cdot}(x)$, is positive. 
That is to say, for every given $x \neq 0$, $\sigma_x$ is almost surely equal to the infimum of $S_{x,h}$ over all $h > \Lambda_{0,0} (x)$ (note that this is not true for all $x$ simultaneously because of the existence of ``fast points'' or of local maxima).

In the sequel, we shall simply 
denote by $\Gamma_x (\cdot)$, the profile at this time $\sigma_x$: $$\Gamma_x (\cdot) := L_{\sigma_x}(\cdot) + \Lambda_{0,0} (\cdot).$$ Remark that almost surely for every $x \in \R$ this curve is equal to 
$\Lambda^+_{x, \Lambda_{0,0}(x)}(\cdot)$, coming from the right-continuous version of the BW (this is contained in Theorem 4.3 (ii) in \cite{TothWerner}). Note also that with this definition $\Gamma_0$ is just the same as the initial profile $\Lambda_{0,0}$. 

The following Lemma describes the joint law of $\Gamma_0$ and $\Gamma_x$. In fact, we will use a slightly stronger version and describe the law of $\Gamma_Y$, when $Y$ is a  for some $\Gamma_0$-measurable random variable $Y$:
\begin {lemma}\label{lem:BWalea}
 Let $Y$ denote a $\Gamma_0$-measurable random variable. Then, conditionally on $\Gamma_0$, the distribution of $\Gamma_Y$ is that of a 
coalescing-reflecting Brownian motion started from $(Y, \Gamma_0 (Y))$, that is reflected on $\Gamma_0$ in the interval between $0$ and $Y$ 
and coalescing with it outside of this interval.
\end {lemma}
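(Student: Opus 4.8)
The plan is to reduce everything to the case of a \emph{deterministic} target and then transfer to the $\Gamma_0$-measurable variable $Y$ by conditioning. The starting point is the identity $\Gamma_Y=\Lambda^+_{Y,\Gamma_0(Y)}$ recalled above (Theorem 4.3(ii) of \cite{TothWerner}), so that it suffices to identify the conditional law, given $\Gamma_0$, of the right-continuous forward line $\Lambda^+_{x,\Gamma_0(x)}$ issued from the point $(x,\Gamma_0(x))$, which lies exactly on the curve $\Gamma_0=\Lambda_{0,0}$. I treat the case $x>0$ throughout; the case $x<0$ follows from the symmetry $x\mapsto -x$ of the Brownian web and its backward lines, and $x=0$ is trivial since then $\Gamma_Y=\Gamma_0$.

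For fixed $x>0$ I would split the curve $y\mapsto\Lambda^+_{x,\Gamma_0(x)}(y)$ into three ranges. For $y\ge x$ the two forward lines $\Gamma_0$ and $\Lambda^+_{x,\Gamma_0(x)}$ pass through the common point $(x,\Gamma_0(x))$, so by the coalescing property of forward lines they coincide on $[x,\infty)$: this is the coalescence regime on the right. For $y$ in $[0,x]$, the right-continuous line is the upper of the two branches meeting at the coalescence point $(x,\Gamma_0(x))$ (the branch $\Gamma_0$ being the lower one, arriving from the origin), so by non-crossing it stays above $\Gamma_0$; the claim is that on this interval, run with $y$ decreasing from $x$, it is a Brownian motion Skorokhod-reflected on $\Gamma_0$. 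Finally, writing $D:=\Lambda^+_{x,\Gamma_0(x)}-\Gamma_0$, once $y$ passes below $0$ the curve continues as an independent Brownian motion coalescing with $\Gamma_0$, i.e.\ $D$ starts from $D(0)=L_{\sigma_x}(0)>0$ and is absorbed at $0$ the first time the two curves meet. Matching the three pieces at $y=x$ (where $D=0$) and at $y=0$ (continuity of $D$) yields the coalescing-reflecting description started from $(x,\Gamma_0(x))$.

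The heart of the argument, and the step I expect to be the main obstacle, is the reflection on $[0,x]$: a priori two \emph{forward} lines only coalesce, so the reflected behaviour must be produced by the conditioning on the whole path of $\Gamma_0$ together with the non-crossing property. I would establish it in one of two equivalent ways. The first is through the local-time interpretation: since $D=\Lambda^+_{x,\Gamma_0(x)}-\Gamma_0=L_{\sigma_x}(\cdot)$ is the occupation density of the TSRM at its first hitting time of $x$, the assertion is exactly the Ray-Knight-type description of these profiles, with the driving noise of $D$ independent of $\Gamma_0$, so that $\Lambda^+_{x,\Gamma_0(x)}=\Gamma_0+D$ carries the stated reflected law. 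The second, purely web-theoretic, realizes $\Gamma_x$ as the limit of the lines $\Lambda_{x,h}$ as $h\downarrow\Gamma_0(x)$ (recall $\sigma_x=\inf_h S_{x,h}$): each such line starts strictly above $\Gamma_0$, stays above by non-crossing, and the upper branch over $[0,x]$ is identified with a path reflected off $\Gamma_0$ via the forward/backward reflection-coalescence rule of \cite[Section 3.1.4]{STW} applied to the dual line through $(x,\Gamma_0(x))$. In either route the delicate points are to justify that conditioning on the entire Brownian curve $\Gamma_0$ turns the intrinsic coalescence into a genuine Skorokhod reflection on $[0,x]$, and to check the requisite independence so that the reflecting and coalescing pieces have precisely the advertised joint law.

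It remains to pass from deterministic $x$ to the $\Gamma_0$-measurable $Y$. Working conditionally on $\Gamma_0$, the variable $Y$ is then deterministic. For a \emph{simple} $\Gamma_0$-measurable $Y=\sum_i q_i\,1_{A_i}$ with $A_i\in\sigma(\Gamma_0)$, on each set $A_i$ one has $\Gamma_Y=\Gamma_{q_i}$, whose conditional law given $\Gamma_0$ is the reflected-coalescing motion started at $(q_i,\Gamma_0(q_i))$, that is $R_Y$ on $A_i$; this settles the simple case. For general $Y$ I would choose simple $Y_n\to Y$ and pass to the limit, using the a.s.\ joint continuity of $x\mapsto\Lambda^+_{x,\Gamma_0(x)}$ and the continuity in its starting point of the reflected-coalescing diffusion to identify the limiting conditional law. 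The only care needed here is to align the approximation with the right-continuity of $h\mapsto\Lambda^+_{\cdot,h}$, for instance by taking $Y_n\downarrow Y$.
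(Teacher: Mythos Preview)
Your outline is broadly valid, but it is considerably more laborious than what the paper does, and one of your two proposed routes for the ``main obstacle'' is circular.

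The paper's proof is very short. It does \emph{not} re-derive the three-piece behaviour (coalesce / reflect / coalesce) for a deterministic target; it simply cites it as the known reflection--coalescence rule for $\Lambda_{x,h}$ relative to $\Gamma_0$ at a fixed $(x,h)$, coming from the construction in \cite[Section~3.1.4]{STW}. The only new content in the lemma is the passage to the $\Gamma_0$-measurable point $(Y,\Gamma_0(Y))$. The paper handles this in one stroke: conditionally on $\Gamma_0$ the point $(Y,\Gamma_0(Y))$ is deterministic, so the conditional law of the increments of $\Lambda_{Y,\Gamma_0(Y)}$ is read off from the fixed-point result; one then invokes Proposition~2.2(v) of \cite{TothWerner} (continuity of the path $\Lambda_{Y,\Gamma_0(Y)}$ even at a random starting point) to conclude that the process genuinely has the reflected-coalescing description. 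No approximation by simple functions or limiting argument in the starting point is written out.

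By contrast, you treat the fixed-$x$ reflection on $[0,x]$ as the heart of the matter and propose two justifications. The first, via the Ray--Knight identity $L_{\sigma_x}(\cdot)=\Gamma_x-\Gamma_0$, is circular: in \cite{TothWerner} that identity is a \emph{consequence} of the Brownian-web construction, so it cannot be used to establish the reflected law of $\Gamma_x$ without assuming what you want to prove. Your second route (limits of $\Lambda_{x,h}$ as $h\downarrow\Gamma_0(x)$ together with the STW rule) is essentially what underlies the citation the paper makes, so it is sound, but it is exactly the work the paper outsources to \cite{STW}. Your extension to random $Y$ by simple approximation and right-continuity in $h$ is correct in principle; the paper replaces it by a direct conditioning plus a single continuity citation, which is cleaner and avoids having to verify joint continuity of $x\mapsto\Lambda^+_{x,\Gamma_0(x)}$.
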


As the ``starting point'' $(Y,\Gamma_0(Y))$ of $\Gamma_x$ is random, this fact is not totally straightforward. Our proof uses features of the BW established in \cite{TothWerner}.

\begin{proof}
We already know that for a \textbf{fixed} point $(x,h)$ in the plane and conditionally on $\Gamma_0$, $\Lambda_{x,h}$ has the distribution of a Brownian motion reflected on $\Gamma_0$ between $0$ and $x$ and coalescing with it outside this interval. As the point $(Y,\Gamma_0(Y))$ is $\Gamma_0$-measurable, conditionally on $\Gamma_0$, the distribution of the increments of $\Lambda_{Y,\Gamma_0(Y)}$ remains those of a Brownian motion starting at this point, reflected on $\Gamma_0$ between $0$ and $Y$ and coalescing with it outside this interval. It remains to use Proposition 2.2 (v) in \cite{TothWerner} which tells us that $\Lambda_{Y,\Gamma_0(Y)}$ is continuous to deduce that the distribution of this process corresponds indeed to the above description.
\end{proof}

\medbreak

A consequence of this lemma is that the distribution of $\sigma_x$ itself can be simply expressed in terms of areas under Brownian curves:
   \begin{align}\label{eqlawT+}
   \sigma_x \stackrel{(d)}{=} \sqrt{2} \left(\int_0^{x} |B_t| dt + \int_x^{\tau'} |B_t| dt\right)  
   \end{align}
   where  $B$ is a Brownian motion started at the origin and $\tau'$ denotes its first hitting time of $0$ after time $x$. 
Indeed, the initial curve $\Gamma_0(x - \cdot)$ has the distribution of a Brownian motion starting at $\Gamma_0(x)$ and the distribution of $\Gamma_x(x-\cdot)$ conditionally on 
$\Gamma_0$ is given by Lemma \ref{lem:BWalea}, thus the difference $\Gamma_{x}(x - \cdot) - \Gamma_0(x - \cdot)$ 
has the distribution of a reflected Brownian motion multiplied by $\sqrt{2}$, absorbed at its first hitting of $0$ after time $x$.

\subsection{Brownian estimates}\label{sec:ResultsBM}

As shown by the example of the law of $\sigma_x$, the construction of the TSRM via the Brownian web makes it possible to express the probability of TSRM-events in terms of Brownian motions and areas under Brownian curves. 
We now collect some results concerning the law of  Brownian motion integrals
that we will need later in the paper. 

Throughout this paper,  $B$ will denote a standard  Brownian motion, and  $\tilde{B}$ a reflected Brownian motion (that has the same law as $|B|$), $P_x$ will denote the law of these processes started at $x$. When $x=0$, we will sometimes simply write $P$ instead of $P_0$.
For each $y \in \R$, the first hitting time of the level $y$ by $B$ (respectively $\tilde{B}$) after time $t$ 
will be denoted by $\tau^{(t)}_y$ (resp. $\tilde{\tau}^{(t)}_y$), when $t=0$, we simply write $\tau_y$ (resp. $\tilde{\tau}_y)$.

In order to derive our estimates about the tail of $X_1$ and $H_1$, we will build on the following rather classical asymptotics about the areas under a Brownian motion and a Brownian bridge. The first two results can for instance be found in \cite{AreaJanson} and the very classical third one in \cite{Areatill0}.
Here and throughout the paper, 
$$\kappa := 2 |a'_1|^3 / 27$$
 where $a'_1$ denotes the first (negative) zero of the derivative of the Airy function Ai.

\begin{propo}\label{propo:resultsBM}
\begin{enumerate}
 \item For some positive constant $\gamma$, when $\eps \to 0$, 
$$P_0\left(\int_0^1 |B_t| dt \le \eps\right) \sim \gamma \, \eps \exp\left(-\frac{\kappa}{\eps^2}\right).
$$
 \item In the case of the Brownian bridge,
$$
P_0\left(\int_0^1 |B_t - t B_1| dt \le \eps\right) \sim \gamma' \exp\left(-\frac{\kappa}{\eps^2}\right)
$$ as $\eps \to 0$
for some positive constant $\gamma'$. 
\item
The law of the area under a Brownian motion starting at $1$ stopped at its first hitting of $0$ is given by 
$$
P_1\left(\int_0^{\tau_0} B_t dt \le u^{-3} \right) = \frac { \int_u^{\infty} e^{-2y^3/9} dy } { \int_0^\infty e^{-2y^3/9} dy } .
$$
\end{enumerate}
\end{propo}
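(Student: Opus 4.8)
The three statements share a common mechanism: each is a functional of the area swept by a diffusion, and in each case the Laplace transform of that area can be analysed through Feynman--Kac, where the resulting linear ODE is a rescaled Airy equation $\psi'' = z\psi$. This is the source of the Airy function and of the constant $\kappa = 2|a'_1|^3/27$ in all three parts. My plan is to establish the exact identity (3) first, and then the two small-deviation estimates (1) and (2) together, since they rely on the same spectral computation.

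For (3), I would begin with the Laplace transform. Writing $A = \int_0^{\tau_0} B_t\,dt$ and $\phi(x) := E_x[e^{-\lambda A}]$ for $x \ge 0$, $\lambda > 0$, I first note that $B$ stays positive on $[0,\tau_0)$, so $A > 0$ and $\phi$ is well defined and bounded. Feynman--Kac yields $\tfrac12\phi'' = \lambda x\,\phi$ on $(0,\infty)$ with $\phi(0) = 1$ and $\phi$ bounded at $+\infty$; after the substitution $z = (2\lambda)^{1/3}x$ this is exactly the Airy equation, whose bounded solution is proportional to $\mathrm{Ai}$, giving $\phi(x) = \mathrm{Ai}((2\lambda)^{1/3}x)/\mathrm{Ai}(0)$ and hence $E_1[e^{-\lambda A}] = \mathrm{Ai}((2\lambda)^{1/3})/\mathrm{Ai}(0)$. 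To recover the stated law I would invert this transform: differentiating the proposed right-hand side gives the candidate density $f(a) = \tfrac{1}{3C}\,a^{-4/3}e^{-2/(9a)}$ with $C = \int_0^\infty e^{-2y^3/9}\,dy$, and the classical integral $\int_0^\infty a^{\nu-1}e^{-pa - q/a}\,da = 2(q/p)^{\nu/2}K_\nu(2\sqrt{pq})$ (taken with $\nu = -\tfrac13$, $p = \lambda$, $q = \tfrac29$) expresses its Laplace transform through the modified Bessel function $K_{1/3}$. Using $\mathrm{Ai}(z) = \pi^{-1}\sqrt{z/3}\,K_{1/3}(\tfrac23 z^{3/2})$ one checks that the $\lambda$-powers cancel and that this coincides with $\mathrm{Ai}((2\lambda)^{1/3})/\mathrm{Ai}(0)$; uniqueness of Laplace transforms then closes the argument, the normalisation being automatic since both sides are probability densities.

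For (1) and (2) I would again look at $E[\exp(-\lambda\int_0^1 |B_t|\,dt)]$ as $\lambda \to \infty$. By Feynman--Kac this is controlled by the Schr\"odinger operator $-\tfrac12\partial_x^2 + \lambda|x|$ on $\R$; the rescaling $x = \lambda^{-1/3}\xi$ turns it into $\lambda^{2/3}(-\tfrac12\partial_\xi^2 + |\xi|)$, so the transform decays like $\exp(-E_0\lambda^{2/3})$, where $E_0$ is the principal eigenvalue of $-\tfrac12\partial_\xi^2 + |\xi|$. The even ground state is forced to satisfy the Neumann condition $\psi'(0) = 0$ by symmetry, and after the Airy substitution this becomes $\mathrm{Ai}'(-2^{1/3}E_0) = 0$, whence $E_0 = 2^{-1/3}|a'_1|$. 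Inserting this $\exp(-E_0\lambda^{2/3})$ behaviour into de Bruijn's exponential Tauberian theorem (exponent $\alpha = \tfrac23$, so the small-ball exponent is $\alpha/(1-\alpha) = 2$) gives $-\log P(\int_0^1 |B_t|\,dt \le \eps) \sim \tfrac{4}{27}E_0^3\,\eps^{-2}$, and $\tfrac{4}{27}E_0^3 = 2|a'_1|^3/27 = \kappa$. The Brownian-bridge case (2) runs through the same operator and hence the same rate $\kappa/\eps^2$, the pinning at both endpoints affecting only the prefactor.

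The main obstacle is precisely this prefactor, which is all that separates the three regimes at the level of their leading exponential. The Tauberian step above yields only the logarithmic asymptotics $P = \exp(-\kappa\eps^{-2}(1+o(1)))$ and is blind to the factor $\gamma\eps$ in (1) versus the constant $\gamma'$ in (2). Pinning these down requires the full asymptotic expansion of the Laplace transform---its amplitude and first correction, not merely the exponential rate---together with a sharp Tauberian inversion, or equivalently a direct saddle-point analysis of the inverse Laplace integral; the extra power of $\eps$ for free Brownian motion compared with the bridge traces back to the single unpinned endpoint. This refined prefactor analysis is exactly the content of the cited computations (\cite{AreaJanson} for (1) and (2), \cite{Areatill0} for (3)), which I would invoke rather than reproduce.
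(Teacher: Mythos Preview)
Your proposal is correct; all three parts are handled by valid, standard arguments. The paper itself does not really prove parts 1 and 2---it simply cites \cite{AreaJanson}---so your spectral/Tauberian sketch of the logarithmic rate already goes beyond what the paper provides, and you rightly identify that the sharp prefactors $\gamma\eps$ versus $\gamma'$ require the finer analysis carried out in that reference. For part 3 your route is genuinely different from the paper's: you compute the Laplace transform $E_x[e^{-\lambda A}]$ via Feynman--Kac, recognise the Airy function, and then invert using the $K_{1/3}$--Airy identity. The paper instead works directly with the distribution function $F(x,A) = P_x(\int_0^{\tau_0} B_t\,dt \le A)$, observes that $F(B_{t\wedge\tau_0}, A - \int_0^{t\wedge\tau_0} B_s\,ds)$ is a martingale so that $(\partial_x^2 - 2x\,\partial_A)F = 0$, and then uses Brownian scaling to note that $F$ depends only on $x/A^{1/3}$, reducing the PDE to an ordinary differential equation in one variable whose explicit solution is the stated formula. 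The paper's argument is shorter and sidesteps special-function identities and Laplace inversion, while yours has the merit of making the Airy structure fully explicit and of unifying all three statements under a single Feynman--Kac/Airy mechanism.
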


This last statement follows in fact directly from the fact that the function 
$F(x,A) := P_x (\int_0^{\tau_0} B_t dt \le A )$ is a function of $x/A^{1/3}$ that satisfies the PDE 
$(\partial^2_x - 2x \partial_A ) F =0$ (because $F(B_{t\wedge\tau_0},A-\int_0^{t \wedge \tau_0} B_s ds)$ is a martingale).

Suppose that $U_1$ and $U_2$ are independent copies of the random variable $\int_0^{\tau_0} B_t dt$ in statement 3. A simple consequence of that estimate that will shall use at some point is that when $x \to 0$, 
\begin {equation}
  P_1 ( U_1 + U_2 \le x ) = \exp ( -  8/(9x) + O ( \log (1/x))).
\label {U1+U2}
\end {equation}
Indeed a lower bound of $P_1 ( U_1 + U_2 \le x )$ is simply given by  $(P_1 ( U_1\le x/2 ))^2$. For the upper bound, a possible proof consists in dividing the interval $[0,x]$ into $[1/x]+1$ intervals of length $x^2$ and to examine the probability that $U_1 + U_2  \le x$ according to which portion $U_1$ belongs to:
\begin{align*}
  P_1 ( U_1 + U_2 \le x ) \le \sum_{j=0}^{[1/x] + 1} P_1(U_1 \in [jx^2,(j+1)x^2]) \, P_1(U_2 \le x-jx^2+x^2)
\end{align*}
Using Proposition \ref{propo:resultsBM}-3, we deduce:
 \begin{align*}
  P_1 ( U_1 + U_2 \le x ) \le \sum_{j=0}^{[1/x] + 1} \exp\left(-\frac{2}{9\,x}\left[\frac{1}{(j+1)x} + \frac{1}{1+x(1-j)}\right] + O(\log(1/x))\right)
\end{align*}
The minimum over $j \in \{0,\cdots,[1/x] + 1\}$ of the function between the brackets takes the form $4(1+O(x))$. It gives the desired upper bound. \qed
\section{Tail estimates for the distribution of $X_1$}
\label {S.3}

\noindent The main goal of the present section is to derive the following fact:
\begin{propo}\label{propo:tailsX1}
When $x \to \infty$,
\begin{align*}
 \PP(X_1 >  x)  = \exp\left(- 2 {\kappa}\, x^3 + O(\ln(x))\right).
\end{align*}
\end{propo}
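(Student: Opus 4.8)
The plan is to reduce the statement to the small-time behaviour of the first hitting time $\sigma_x$, for which the representation \eqref{eqlawT+} and the Brownian area estimates of Proposition \ref{propo:resultsBM} are available. Since $X$ is continuous and $X_0=0<x$, the event $\{X_1>x\}$ forces $X$ to have reached level $x$ before time $1$, so $\{X_1>x\}\subseteq\{\sigma_x<1\}$. The scaling property of the TSRM gives $\sigma_x\stackrel{(d)}{=}x^{3/2}\sigma_1$ (take $a=x^{3/2}$ in $(X_{at},H_{at})\stackrel{(d)}{=}(a^{2/3}X_t,a^{1/3}H_t)$), hence $\PP(X_1>x)\le\PP(\sigma_1\le x^{-3/2})$. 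Using \eqref{eqlawT+} and discarding the nonnegative second integral, $\{\sigma_1\le\delta\}\subseteq\{\int_0^1|B_t|\,dt\le\delta/\sqrt2\}$, so with $\delta=x^{-3/2}$ Proposition \ref{propo:resultsBM}(1) yields $\PP(X_1>x)\le P_0(\int_0^1|B_t|\,dt\le x^{-3/2}/\sqrt2)=\exp(-2\kappa x^3+O(\ln x))$. This already gives the upper bound.

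For the lower bound I would first isolate a favourable window for $\sigma_x$. For a fixed $u\in(0,1)$, $\PP(\sigma_x\le 1-u)=\PP(\sigma_1\le(1-u)x^{-3/2})$ has exponential rate $2\kappa x^3/(1-u)^2$, which is strictly larger than $2\kappa x^3$; hence it is negligible compared with a lower bound of order $\exp(-2\kappa x^3+O(\ln x))$ for $\PP(\sigma_x\le1)$, so that $\PP(\sigma_x\in[1-u,1])\ge\tfrac12\PP(\sigma_x\le1)$ for large $x$. To obtain the matching lower bound $\PP(\sigma_1\le\delta)\ge\exp(-2\kappa/\delta^2+O(\ln(1/\delta)))$, I would use that when $\int_0^1|B_t|\,dt$ is small the endpoint $B_1$ is forced to be small, and by the strong Markov property and Brownian scaling the second integral in \eqref{eqlawT+} is then of order $|B_1|^3$, hence of smaller order than $\delta$; thus imposing $\int_0^1|B_t|\,dt\le(1-o(1))\delta/\sqrt2$ together with $|B_1|$ small costs only the small-ball exponent of Proposition \ref{propo:resultsBM}(1), with the announced rate.

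Finally I would pass from $\{\sigma_x\in[1-u,1]\}$ to $\{X_1>x\}$ using the Markov property of the pair $(X_t,L_t(\cdot))$. At the record time $\sigma_x$ the process sits at $(x,\Gamma_x)$ with $L_{\sigma_x}(x)=0$, and, conditionally on $\{\sigma_x=s\}$ and on this profile, the restarted process is a TSRM issued from $(x,\Gamma_x)$; the event $\{X_1>x\}$ is implied by the event that this restarted process stays above $x$ during the remaining time $1-s\le u$. Applying the scaling relation with $a=x^{-3/2}$ maps this to the event that a TSRM started near position $1$ with a profile of vanishing area stays above level $1$ for a time of order $ux^{3/2}$, that is, to a persistence event for the self-similar process. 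Such persistence probabilities decay only polynomially in the time horizon, so the conditional probability is bounded below by $x^{-C}$ for some fixed $C$; combined with the window estimate this gives $\PP(X_1>x)\ge x^{-C}\exp(-2\kappa x^3+O(\ln x))=\exp(-2\kappa x^3+O(\ln x))$, matching the upper bound.

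The routine part is the upper bound and the small-ball matching; the hard part will be the last step, namely establishing a polynomial (rather than merely sub-exponential) lower bound for the persistence probability that the restarted TSRM stays above its starting record for the allotted time. Making this quantitative and uniform in $x$ — equivalently, showing that the cost of remaining above $x$ after $\sigma_x$ is only polynomial — is where the self-similar structure of the TSRM and the explicit description of the profile $\Gamma_x$ from Lemma \ref{lem:BWalea} must be used carefully; everything else contributes only to the $O(\ln x)$ correction.
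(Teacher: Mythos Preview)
Your upper bound is exactly the paper's. The lower bound is where you diverge, and where there is a genuine gap.

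The paper never restarts at $\sigma_x$ or invokes a persistence exponent. It works directly in the Brownian web: overshoot to $x+1/x^2$ and consider four curves --- $\Gamma_0$, $\Gamma_0'$ (profile when $L_t(0)$ first reaches $1/x$), $\tilde\Gamma_x$ (when $L_t(x)$ first reaches $1/x$), and $\hat\Gamma_{x+1/x^2}$ (when $L_t(x+1/x^2)$ first reaches $1/(2x)$). Four events are imposed, one per spatial strip $(-\infty,0]$, $[0,x]$, $[x,x+1/x^2]$, $[x+1/x^2,\infty)$; each involves only the \emph{difference} of two web curves on that strip, which is a (reflected or absorbed) $\sqrt2$-Brownian motion there, independent of what happens on the other strips. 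The ``stay above $x$ for a unit of time'' part is simply the event $\int_{x+1/x^2}^\infty(\hat\Gamma_{x+1/x^2}-\Gamma_0)\,dy\ge1$: a $\sqrt2$-Brownian motion started at $1/(2x)$ and absorbed at $0$ has integral at least $1$, which costs only $c/x^{3/2}$. The dominant exponential comes solely from the $[0,x]$ strip, via the Brownian-\emph{bridge} small ball (Proposition~\ref{propo:resultsBM}(2), not (1), since one must simultaneously force the endpoint to be tiny).

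Your step from $\{\sigma_x\in[1-u,1]\}$ to $\{X_1>x\}$ is the gap. First, the scaling is inconsistent: to send level $x$ to level $1$ you need $\lambda=x^{3/2}$ in $(X_{\lambda t})\stackrel{d}{=}(\lambda^{2/3}X_t)$, and then the remaining time $u$ becomes $u x^{-3/2}$, not $u x^{3/2}$; with $a=x^{-3/2}$ as you wrote, level $x$ maps to $x^2$, not $1$. Second, and more fundamentally, the restarted process at $\sigma_x$ carries the profile $\Gamma_x$ as part of its state, and you are conditioning on the rare event $\{\sigma_x\in[1-u,1]\}$, which forces that profile to be highly atypical; you cannot quote a generic ``polynomial persistence'' for the stationary TSRM (no such estimate is available in the paper), nor have you decoupled the post-$\sigma_x$ behaviour from this conditioning. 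The paper's device is exactly to replace the abstract Markov/persistence step by an explicit area event on the web in $[x+1/x^2,\infty)$, which is independent of the events on $(-\infty,x+1/x^2]$ by construction. If you want to salvage your outline, that is the fix: trade the persistence claim for a concrete web event to the right of $x$ whose probability you can compute.
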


Note that $X_1$ and $-X_1$ have the same distribution, so that this also describes the behavior of $\PP(X_1 < -x)$ when $x  \to +\infty$
We would like to also point out that our proof can be easily adapted to the case when the initial condition is flat. The only difference is that the coefficient $2 {\kappa}$ in front of $x^3$ is replaced by $\kappa$ (because the corresponding Brownian motion is not multiplied by $\sqrt {2}$).

\begin{figure}
\centering
\includegraphics[width = 9cm]{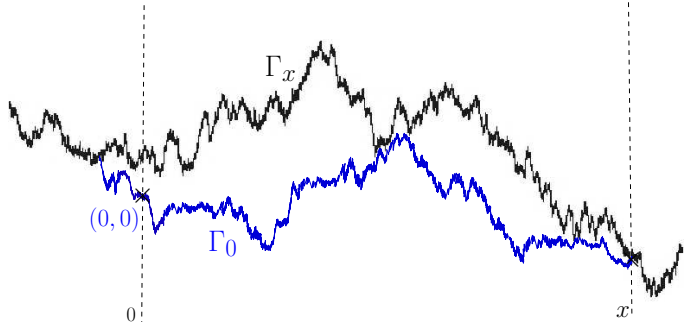}
\caption{The two reflected-coalescing curves $\Gamma_x$ and $\Gamma_0$}
\end{figure}

\begin{proof}
Recall the representation of the law of $\sigma_x$ from the end of Section \ref{subsec:TSRMandBW}.
It follows that
$$
 \PP\left(\sup_{s\in [0,1]} X_s \ge x \right) = 
\PP ( \sigma_x \le 1 ) \le P \left( \sqrt {2} \int_0^x \tilde B_u du \le 1 \right) = 
 P\left(\int_0^1 \tilde{B}_u du \le \frac{1}{\sqrt{2x^3} }\right).
$$
Combined with Proposition \ref{propo:resultsBM}-1, this proves immediately the upper bound.

For the lower bound, it is sufficient to estimate the probability of a well-chosen subset of the event $\{X_1 > x\}$, 
that can be easily described using  the Brownian web.
In order to ensure that $X_ 1 > x$, it would for instance suffice that $\sigma_{x+1/x^2} < 1$ and 
that $X$ stays to the right of $x$ during a time-interval of length $1$ after $\sigma_{x+1/x^2}$. We will use a slight variation of this 
idea:
 Let $\tilde \Gamma_x$ denote the line corresponding to the first time at which the local time 
$L_t (x)$ of $X$ at $x$ exceeds $1/x$.  Let $\hat \Gamma_{x+1/x^2}$ denote the line corresponding to the first time at which the local time at $x+1/x^2$ 
exceeds $1/(2x)$, and finally let $\Gamma_0'$ be the line corresponding to the first time at which the local time at $0$ reaches $1/x$. We will evaluate the probability 
that the following four events hold simultaneously (see Figure \ref{tailsX1quatrecourbes}. for a representation of those events):
\begin {itemize}
 \item The integral of $\tilde \Gamma_x  - \Gamma_0$ over $[0,x]$ does not exceed $1-2/x^3$ and $\tilde \Gamma_x(0) < 1/x^4$.
 \item The integral of $\Gamma_0' - \Gamma_0$ on $(- \infty, 0)$ does not exceed $1/x^3$.
 \item $\hat \Gamma_{x+1/x^2} (x) - \Gamma_0 (x) \le 1/x$ and $\hat \Gamma_{x+1/x^2} - \Gamma_0$ hits $0$ on $[x, x+1/x^2]$, and the integral of this function on 
$[x, x+1/x^2]$ does not exceed $1/x^3$.
 \item The integral of $\hat \Gamma_{x+1/x^2} - \Gamma_0$ on $[x+1/x^2, \infty)$ is greater than one.
\end {itemize}

\begin{figure}
\centering
\includegraphics[width = 14cm]{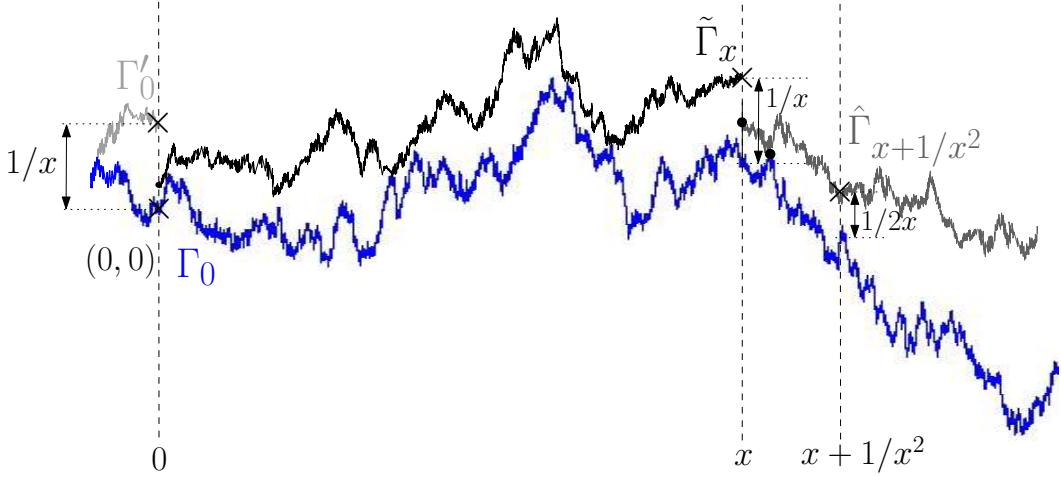}
\caption{The BW-curves $\Gamma_0$, $\Gamma'_0$, $\tilde \Gamma_x$ and $\hat \Gamma_{x+1/x^2}$}
\label{tailsX1quatrecourbes}
\end{figure}

It is easy to check just using monotonicity of the BW that if these four events hold then $X_1$ will be bigger than $x$ -- i.e. to the right of $x$ in the two-dimensional picture (the first, second and third one imply 
$\sup_{s \le 1} X_s \ge x+1/x^2$, the third and last one ensure that $X_1$ stays above $x$ during the time-interval $[\sigma_{x+1/x^2}, \sigma_{x+1/x^2}+1]$). Notice also that 
these four events are independent as the processes defining them (restricted to the appropriate time-intervals) correspond to different parts of the BW (and this is why we chose to work with  these events). Let us evaluate the probability of each of them. Thanks to  Brownian scaling, the second and the third one are equal to positive constants independent of $x$.

If the process $\hat \Gamma_{x+1/x^2} - \Gamma_0$ stays above $1/(4x)$ in the time interval of length $4x$ starting at $x +1/x^2$, then the fourth event is satisfied. It implies that the probability of the fourth event is bounded from below by 
\begin{align}
 P_0\left(\inf_{s \le 4x} B_s \ge -1/(4\sqrt{2}x)\right) = P_0\left(|B_1| \le 1/(8\sqrt{2}x^{3/2})\right) \ge c/x^{3/2} \label{fourthevent}
\end{align}
for some absolute constant $c$.

The probability of the first one is responsible for the main exponential term: The strong Markov property shows that it is bounded from below by
$$
P_{1/(\sqrt{2}x)}\Big(\tau_0 < 1/x^2,\, \int_0^{\tau_0} B_t dt \le 1/(\sqrt{2}x^3)\Big) \, P_0\left(\sqrt{2}\int_0^x |B_t| dt \le 1-3/x^3,\, \sqrt{2} |B_x| \le 1/x^4\right).
$$
The scaling property shows again that the first term in this product does not depend on $x$.  The second term can be evaluated thanks to the Brownian bridge. Scaling shows that it is bounded from below by:
\begin{align*}
 &P_0\left(\int_0^1 |B_t - tB_1| dt \le \frac{1}{\sqrt{2} x^{3/2}}\left(1-3/x^3\right) - \frac{|B_1|}{2},\, |B_1| \le \frac{1}{\sqrt{2} x^{9/2}}\right) \\
&\ge P_0\left(\int_0^1 |B_t - t B_1| dt \le \frac{1}{\sqrt{2} x^{3/2}}\left(1-4/x^3\right)\right) \times P_0\left( |B_1| \le \frac{1}{\sqrt{2} x^{9/2}}\right)
\end{align*}
because of the independence between $(B_t - t B_1,\,t \in [0,1])$ and $B_1$. 
Putting the pieces together, we get finally that 
\begin{align}
P ( X_1 \ge x ) \ge \frac{c'}{x^{6}} \times P_0\left(\int_0^1 |B_t - t B_1| dt \le \frac{1}{\sqrt{2} x^{3/2}}\left(1-4/x^3\right)\right) \label{lastprobaX1}
\end{align}
where $c'$ is some absolute constant. Proposition \ref{propo:resultsBM}-2 then allows to conclude.
\end{proof}

\section{Law of the iterated logarithm for $X$}
\label {S.4}

\subsection {Statement and proof of the upper bounds}
The main goal of this section is to use the previous estimates in order to derive the analogue for $X$ of the law of the iterated logarithm:

\begin {propo} \label{propo:LIL}
Almost surely
$$
\limsup_{t \to +\infty} \frac{X_t}{t^{2/3} \left(\ln\ln(t)\right)^{1/3}} =  \limsup_{t \to 0+} \frac{X_t}{t^{2/3} \left(\ln\ln(1/t)\right)^{1/3}}  = 1/(2\kappa)^{1/3}.
$$
\end {propo}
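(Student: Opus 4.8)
The plan is to prove the law of the iterated logarithm in the classical way, splitting it into an upper bound (the \emph{limsup} is at most the claimed constant) and a matching lower bound (the \emph{limsup} is at least the constant), and then to transfer the large-time statement to the small-time statement via the $2/3$-scaling property of the TSRM. The key input is Proposition \ref{propo:tailsX1}, which gives
$$
\PP(X_1 > x) = \exp\left(-2\kappa\, x^3 + O(\ln x)\right),
$$
together with scaling: since $(X_{at},\,t\ge 0)$ has the law of $(a^{2/3}X_t,\,t\ge 0)$, for any fixed $t$ the random variable $X_t/t^{2/3}$ has the law of $X_1$, so
$$
\PP\left(X_t > c\, t^{2/3}(\ln\ln t)^{1/3}\right) = \exp\left(-2\kappa\, c^3\,\ln\ln t + O(\ln\ln\ln t)\right) = (\ln t)^{-2\kappa c^3 + o(1)}.
$$
Writing $c=1/(2\kappa)^{1/3}$ one gets exponent exactly $-1$ (up to the lower-order correction), which is the critical threshold distinguishing summability from non-summability along a geometric sequence of times---exactly the mechanism behind the iterated logarithm.

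For the upper bound I would work along a geometric subsequence $t_n=\theta^n$ for some $\theta>1$. First I would control the running supremum $\sup_{s\le t_n}X_s$ rather than the single value $X_{t_n}$; the bound
$$
\PP\left(\sup_{s\le 1}X_s\ge x\right)\le P\left(\int_0^1|\tilde B_u|\,du\le \tfrac{1}{\sqrt{2x^3}}\right)
$$
from the proof of Proposition \ref{propo:tailsX1}, combined with Proposition \ref{propo:resultsBM}-1, shows the supremum satisfies essentially the same tail as $X_1$ itself, so no loss of constant occurs. I would then choose $c=(1+\delta)/(2\kappa)^{1/3}$, so that $\PP(\sup_{s\le t_n}X_s\ge c\,t_n^{2/3}(\ln\ln t_n)^{1/3})$ is summable in $n$, apply Borel--Cantelli to conclude the event happens only finitely often, and finally interpolate between consecutive $t_n$ using monotonicity of the supremum and the slow variation of $t\mapsto t^{2/3}(\ln\ln t)^{1/3}$. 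Letting $\theta\downarrow 1$ and $\delta\downarrow 0$ then yields the limsup $\le 1/(2\kappa)^{1/3}$.

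For the lower bound I would again take $t_n=\theta^n$ and try to show that the events $A_n=\{X_{t_n}-X_{t_{n-1}}\ge (1-\delta)t_n^{2/3}(\ln\ln t_n)^{1/3}\}$ (or increments over disjoint time windows) occur infinitely often via the second Borel--Cantelli lemma, using $c=(1-\delta)/(2\kappa)^{1/3}$ to make the probabilities non-summable. The difficulty is that the TSRM is not a process with independent increments, so the clean independence needed for second Borel--Cantelli is unavailable; here I expect to invoke a $0$--$1$ law (as the introduction promises) to upgrade ``positive probability of exceeding a level infinitely often'' into ``almost sure,'' and to use the self-repelling nature---once $X$ is pushed far right, it is unlikely to immediately return---to lower-bound the relevant conditional probabilities via a subset of the Brownian-web event as in the lower-bound computation of Proposition \ref{propo:tailsX1}. \textbf{This handling of dependence is the main obstacle}: the large-deviation estimate controls a single time, but the almost-sure statement requires decorrelating values at widely separated times, and I would need either a Markov/renewal structure for the Brownian web restricted to disjoint spatial regions or a suitable $0$--$1$ law to close the gap.

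Finally, for the small-time statement I would apply the scaling relation directly: the time-inversion/scaling $(X_{at})\stackrel{(d)}{=}(a^{2/3}X_t)$ maps the behavior as $t\to\infty$ to the behavior as $t\to 0+$, and since $t^{2/3}(\ln\ln t)^{1/3}$ is transformed into $t^{2/3}(\ln\ln(1/t))^{1/3}$ under $t\mapsto 1/t$, the same constant $1/(2\kappa)^{1/3}$ appears. I would make this rigorous by a scaling argument combined with the same Borel--Cantelli scheme run over $t_n=\theta^{-n}$.
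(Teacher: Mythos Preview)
Your upper bound is essentially the paper's argument and is correct: the tail of $\sup_{s\le 1}X_s$ is controlled exactly as you say, and Borel--Cantelli along a geometric sequence, followed by letting $\theta\downarrow 1$ and $\delta\downarrow 0$, gives the limsup $\le 1/(2\kappa)^{1/3}$.

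The lower bound, however, has a genuine gap. You correctly flag the obstacle (no independent increments), but neither of your two suggested remedies is the one that actually works here. The paper does \emph{not} use a $0$--$1$ law for $X$; that device is reserved for $H$, where the tail estimates are too crude to pin down the constant. Nor does the paper look at increments $X_{t_n}-X_{t_{n-1}}$: these have no tractable independence structure. Instead the paper switches from times to \emph{positions}, setting $\lambda_n=\lambda^{n^{1+\eps}}$ (note: super-geometric, so that $\lambda_n/\lambda_{n-1}\to\infty$) and studying the hitting times $\tilde\sigma_n=\sigma_{\lambda_n}$. The goal becomes $\{\tilde\sigma_n\le \gamma_n\}$ i.o.\ for the right $\gamma_n$. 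To manufacture independence, the paper builds nested boxes $L_n$ and defines surrogate events $\mathcal A_n$, $\mathcal B_n$ using explicit forward/backward Brownian-web lines launched from the \emph{corners} of $L_{n-1}$ rather than from $\Gamma_0$ itself (since $\Gamma_0$ inside $L_n\setminus L_{n-1}$ depends on what happened in $L_{n-1}$). These surrogate events live entirely in the disjoint annuli $L_n\setminus L_{n-1}$, hence are genuinely independent, and one checks $\sum\PP(\mathcal A_n\cap\mathcal B_n)=\infty$ via the Brownian-bridge estimate exactly as in the lower bound of Proposition~\ref{propo:tailsX1}. A separate deterministic argument (BW monotonicity plus the a.s.\ event $\mathcal D_{n-1}$ that $\Gamma_0$ stays in $L_{n-1}$) shows $\mathcal A_n\cap\mathcal B_n\cap\mathcal D_{n-1}\subset\{\tilde\sigma_n\le\gamma_n\}$. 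Your phrase ``Markov/renewal structure for the Brownian web restricted to disjoint spatial regions'' is pointing in the right direction, but the actual construction---replacing $\Gamma_0$ by web lines from box corners, and the super-geometric spacing needed to make the boxes nest while keeping $\sum\PP(\mathcal A_n)=\infty$---is the substance of the proof and is missing from your sketch.

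One minor correction: the scaling $(X_{at})\stackrel{(d)}{=}(a^{2/3}X_t)$ is \emph{not} a time inversion and cannot by itself transfer the $t\to\infty$ statement to $t\to 0$ (it relates two processes in law, not the large- and small-time behavior of a single path). The paper handles $t\to 0$ exactly as you say in your last clause: rerun the same Borel--Cantelli/box argument with $\lambda<1$.
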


Stationarity shows that this also describes the almost sure fluctuations at any given positive time $t_0$ i.e. that almost surely,
$$
\limsup_{t \to 0+} \frac{X_{t_0+t} - X_{t_0}} {t^{2/3} \left(\ln\ln(1/t)\right)^{1/3}} = 1/(2\kappa)^{1/3}.
$$
The same type of local result will hold for the TSRM with flat initial condition at any given positive time. However, if $X$ is the TSRM with flat initial conditions, then the result stated in the proposition does not hold anymore. The proof can however be directly adapted and then shows that one just has to replace the constant $1/ (2 \kappa)^{ 1/3}$ by   $1/\kappa^{1/3}$.

\medskip
Let us now first briefly derive the upper bounds in this proposition i.e. the fact that these limsups are not greater than $1/ ( 2 \kappa)^{1/3}$. 
This part of the proof will go along similar lines as the standard proof of the LIL for the Brownian motion (see e.g., Chapter II p.~56 of \cite{RevuzYor}) based on Borel-Cantelli Lemmas. Let us first focus on the $t \to \infty$ part.
Clearly, it suffices to show that for some given $\lambda > 1$ and $\eps>0$, there almost surely exists some $N$ such that for all $n \ge N$, 
$$ \sup_{t \in [0,\lambda^n]}X_{t}
\le \frac{1+ \eps}{(2\kappa)^{1/3}} \, \lambda^{2n/3} \left(\ln\ln(\lambda^n)\right)^{1/3}.
$$
If we define 
$$x_n := \frac {1+\eps}{(2\kappa)^{1/3}} \,  (\ln\ln(\lambda^n))^{1/3}, $$
we get (because $\sup_{t \in [0,\lambda^n]}X_{t}/\lambda^{2n/3}$ and $\sup_{t \in [0,1]}X_t$ have the same law) from Proposition \ref {propo:tailsX1} that
$$
\PP\left(\lambda^{-2n/3} \,\sup_{t \in [0,\lambda^n]}X_{t} \ge x_n\right) = \PP\left(\sup_{t \in [0,1]}X_t \ge x_n\right) = e^{- 2 \kappa\, x_n^3 + O(\ln(x_n))}.
$$
Our choice for $x_n$ ensures that
$$\sum_{n} \PP\left(\lambda^{-2n/3} \,\sup_{t \in [0,\lambda^n]}X_{t} \ge x_n\right) <  \infty .$$
Note that $\eps$ can be chosen arbitrarily small which implies the result when $t \to \infty$.

The proof for $t \to 0$ is almost identical, except that we now have to choose $\lambda \in (0,1)$ and that the events we will consider are:
$$ \sup_{t \in [\lambda^n,\lambda^{n-1}]}X_{t}
\le \frac{1+ \eps}{(2\kappa)^{1/3}} \, \lambda^{2(n-1)/3} \left(\ln\ln(1/\lambda^{n-1})\right)^{1/3}.
$$
The result follows again using scaling.

\subsection {Proof of the lower bounds}

The purpose of this subsection is to derive the lower bounds in Proposition \ref {propo:LIL}.
 Let us stress that some caution is needed because the process $X$
 does not have independent increments, so that one has the standard proof of the LIL for Brownian motion can not be adapted directly.
 
We again first focus on the case where $t \to +\infty$.
Let us fix any small $\delta$. Our goal is to show that for $c := 1/(2\kappa)^{1/3}$ one can almost surely find a 
sequence of times $t_n \to +\infty$, such that
\begin{align}\label{infnboftimes}
X_{t_n} \ge (c - \delta) \, t_n^{2/3} \left(\ln\ln(t_n)\right)^{1/3}.
\end{align}

We will choose $t_n$ to be some first hitting times. More precisely, let us choose $\lambda > 1$, $\eps \in (0,2/3)$ and define for each 
$n \ge 1$, 
$$\lambda_n = \lambda^{n^{1+\eps}},$$ and let 
\begin{align*}
\tilde \sigma_n := \sigma_{\lambda_n} = \inf\{t \ge 0 \;:\; X_t = \lambda_n\}.
\end{align*}
Our sequence $(t_n)$ will be a subsequence of $(\tilde \sigma_n)$. 

Note that $\lambda_{n} / \lambda_{n-1} \sim \lambda^{(1+ \eps)n^{\eps}}$ increases quite rapidly when $n \to \infty$,
but not too fast either (both facts will be useful in our proof).
Define
 $$\gamma_n := c'\,\lambda_n^{3/2}\bigg{/}\sqrt{\ln\ln(\lambda_n^{3/2})}$$
where the positive constant $c'$ will be chosen later.

\begin{figure}
\centering
\includegraphics[width = 10cm]{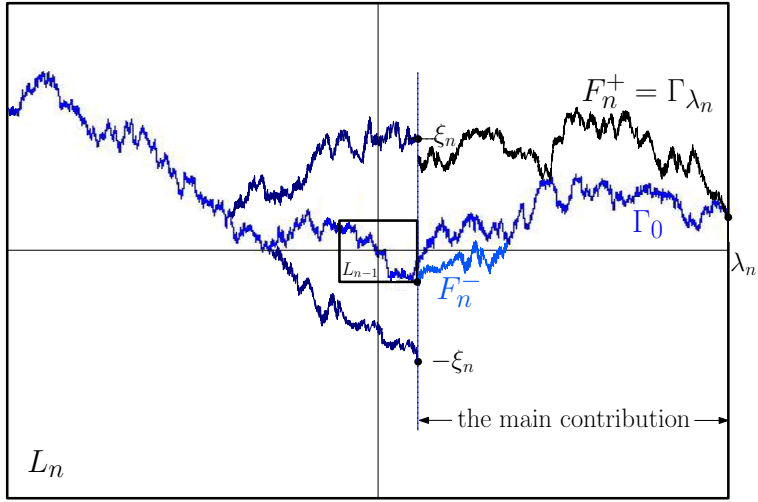}
\caption{Boxes $L_n$ and $L_{n-1}$ and the curves involved in $\mathcal{A}_n$ and $\mathcal{B}_n$}
\label{LIL}
\end{figure}

Our goal is to prove that $\tilde \sigma_n \le \gamma_n$ (i.e. that the area between $\Gamma_{\lambda_n}$ and  $\Gamma_0$ does not exceed  $\gamma_n$) infinitely often as soon as $c' > \sqrt{2 \kappa}$, which indeed implies (\ref{infnboftimes}).

Let us define the boxes $L_n := [-\lambda_n,\lambda_n] \times [-\eta_n,\eta_n]$ with $\eta_n := 3\sqrt{ \lambda_n \ln(n)} $. As the sequence $(\lambda_n)$ increases fast, the box $L_{n-1}$ is really small compared to $L_n$ when $n$ is large. 

Our choice for $\eta_n$ ensures that if we define
$$\mathcal{D}_n := \{\Gamma_0([-\lambda_{n},\lambda_n]) \in [-\eta_n,\eta_n] \}$$
then 
$$ \sum_n \PP ( \mathcal {D}_n^c  ) < \infty,$$
so that almost surely, $\mathcal {D}_n$ holds for all large enough $n$. 
Similarly, one can also for instance see that 
$$\Gamma_{\lambda_n} ([\lambda_{n-1},\lambda_n]) \in [-\eta_n,\eta_n]$$
almost surely for all large enough $n$.

The fact that the events $\{\tilde \sigma_n \le \gamma_n\}$ for $n \ge 1$ are not independent leads us to define closely related events that happen to be independent, so that we will be able to apply Borel-Cantelli arguments. The events that we are going to focus on will be defined in terms of the Brownian Web in the disjoint portions $(L_n \setminus L_{n-1})$. 
One minor technical difficulty is that in order to recognize where $\Gamma_0$ is in $L_n \setminus L_{n-1}$, one needs information about the Brownian web in $L_{n-1}$. We will circumvent this 
problem by considering instead the forward line in the web denoted by $F_{n}^-$ 
started from the bottom right corner of $L_{n-1}$. Then, we define $F_n^+$ to be the backward line in the web that is started from $F_n^- (\lambda_n)$ reflected above this curve $F_n^-$.

Now, we define the event ${\mathcal A}_n$ that the following three events hold:
\begin {itemize}
\item The area between $F_n^+$ and $F_n^-$ is small i.e. 
$$\int_{\lambda_{n-1}}^{\lambda_{n}} (F_n^+ (u) - F_n^- (u)) \,du \le (1-\eps) \gamma_n.$$
\item $F_n^+ ( \lambda_{n-1} ) \in [ \eta_{n-1}, \xi_n ]$ with $\xi_n := \eps c' \sqrt{\lambda_n/\ln(\ln(\lambda_n^{3/2})}$. 
\item $F_n^-$ and $F_n^+$ stay in $L_n$ during the interval $ [ \lambda_{n-1}, \lambda_n ]$.  
\end {itemize}
The last event ensures that ${\mathcal A}_n$ is indeed measurable with respect to the Brownian web in $L_n$. 
Note that, as before, the probability of this third event is very close to $1$ for $n$ large, and in fact equal to $1-a_n$ for some summable $a_n$.

We can use the same trick as in the proof of the lower bound of the tail of $X_1$ in order to get a lower bound for the probability that the first two events involved in this definition happen: Indeed, using scaling and then the independence between $t \in [0,1]\mapsto B_t - t B_1$ and $B_1$, we get that
\begin {align*}
 \PP(\mathcal{A}_n)  + a_n &\ge P\left(\int_0^1|B_u| \,du \le (1-\eps) c' \alpha_n,\; \sqrt{2} \eta_{n-1}/\sqrt{\lambda_n - \lambda_{n-1}}\le |B_1| \le \eps c' \alpha_n\right) \\
&\ge P\left(\int_0^1 |B_u - u B_1| \,du \le (1-\frac{3}{2}\eps) c' \alpha_n\right) \times P\left(|B_1| \in \Big[ \frac{\sqrt{2} \eta_{n-1}}{\sqrt{\lambda_n - \lambda_{n-1}}},  \eps c' \alpha_n\Big] \right)
\end{align*}
where $\alpha_n := 1/\sqrt{2 \,\ln\ln(\lambda_n^{3/2})}$. Part 2 of Proposition \ref{propo:resultsBM} then shows that 
$$\sum \PP(\mathcal{A}_n) = \infty $$ as soon as
 $c' \geq (1+\eps)^{1/2}/(1-3\eps/2) \times \sqrt{2 \kappa}$ (this is where we use that the sequence $(\lambda_n)$ is not increasing too fast).

Consider now the two backward lines started at $(\lambda_{n-1}, \xi_n )$ and $(\lambda_{n-1} , - \xi_n)$. 
Define the event $\mathcal {B}_n$ that the area between these two curves does not exceed $\xi_n^3$, that they coalesce in the interval
 $[\lambda_{n-1} - 2 \xi_n^2, \lambda_{n-1} - \xi_n^2]$,
 that they do not enter the box $[\lambda_{n-1}-\xi_n^2, \lambda_{n-1}] \times [-\xi_n/3, \xi_n/3]$ and do not exit the box $[\lambda_{n-1}-2\xi_n^2, \lambda_{n-1}] \times [-2\xi_n,2\xi_n]$.
Clearly, scaling shows that the probability of this event does not depend on $n$. Furthermore, our definition of $\xi_n$ ensures that for large enough $n$, 
one can check whether this event holds by just looking at the Brownian web in the part of $L_n \setminus L_{n-1}$ that is to the left of $\lambda_{n-1}$, which implies in particular that $\mathcal{B}_n$ is independent of $\mathcal{A}_n$.

Hence, it also follows that the events $(\mathcal{A}_n \cap \mathcal{B}_n)$ are independent, so that almost surely, $\mathcal{A}_n \cap \mathcal{B}_n$ holds for infinitely many values of $n$. As $\mathcal{D}_n$ holds almost surely for all large $n$, we conclude that almost surely
 $\mathcal{A}_n \cap \mathcal{B}_n \cap \mathcal{D}_{n-1}$ occurs infinitely often.
But we can notice that when this last event holds, then, due to the monotonicity properties of the Brownian web, we get that $F_n^- \leq \Gamma_0$, $F_n^-$ coalesces with $\Gamma_0$ in the interval $[\lambda_{n-1},\lambda_n]$ (because $F_n^+(\lambda_{n-1})$ is bigger than $\eta_{n-1}$) and thus $F_n^+ = \Gamma_{\lambda_n}$. Moreover, $\{F_n^+(\lambda_{n-1}) \leq \xi_n\} \cap \mathcal{D}_{n-1}$ implies that the backward lines involved in $\mathcal{B}_n$ enclose $\Gamma_0$ and $\Gamma_{\lambda_n}$. As $\xi_n^3$ is much smaller than $\eps \gamma_n$, it permits to conclude that $\mathcal{A}_n \cap \mathcal{B}_n \cap \mathcal{D}_{n-1}$ is included in $\tilde \sigma_n \leq \gamma_n$ as soon as $c'$ is greater than $(1+\eps)^{1/2}/(1-3\eps/2) \times \sqrt{2 \kappa}$. Taking the limit $\eps \to 0$ gives the result.

\bigskip
\noindent The proof of the lower bound when $t \to 0$ is almost identical. The very same proofs goes through without 
modification, one just has to take $\lambda$ smaller than $1$ instead of larger than $1$

\section{Fluctuations of the height}
\label {S.5}

\subsection {Statement of tail-estimates}

In this section, we will mostly study the tails of the distribution of the height $H_t$. 
Again, we can restrict ourselves to $t=1$ thanks to the scaling property. 
The estimates that we will derive are the following:
\begin{propo}
\label {tailprop}
There exists two positive constants $\eta$ and $\eta'$ such that for all large $h$,
\begin{align*}
\begin{array}{llcllll}
 \exp(-\eta h^{3/2})&\le &\PP(H_1 \le -h) &\le& \PP\left(\inf_{t \in [0,1]} H_t \le -h\right)&\le& \exp(-\frac{1}{\eta} h^{3/2}) \\
\exp(-\eta' h^{3/2})&\le &\PP(H_1 \ge h) &\le& \PP\left(\sup_{t \in [0,1]} H_t \ge h\right)&\le& \exp(-\frac{1}{\eta'} h^{3/2}).
\end{array}
\end{align*}
\end{propo}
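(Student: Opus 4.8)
The plan is to base everything on the identity
$$
H_t \;=\; \Lambda_{0,0}(X_t) + L_t(X_t),
$$
which follows from the relation $\Lambda_{0,0}(\cdot) + L_t(\cdot) = \Lambda_{X_t,H_t}(\cdot)$ evaluated at the current position $X_t$ (recall that $\Lambda_{X_t,H_t}(X_t)=H_t$). Since $L_t\ge 0$, a large value of $H$ essentially forces the initial profile $\Lambda_{0,0}$ to take a large value of the appropriate sign somewhere inside the range $R_1:=[\min_{s\le 1}X_s,\max_{s\le 1}X_s]$ visited up to time $1$. The exponent $3/2$ then comes from a competition: reaching a position at distance $M$ from the origin by time $1$ costs, by Proposition~\ref{propo:tailsX1}, a probability $\exp(-\Theta(M^3))$, whereas the Brownian profile $\Lambda_{0,0}$ attains a level $\pm h$ inside $[-M,M]$ with probability $\exp(-\Theta(h^2/M))$; balancing these at $M\asymp\sqrt{h}$ makes both exponents of order $h^{3/2}$.

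For the upper bounds I would exploit this one-sided control. On the negative side, $H_t\ge \Lambda_{0,0}(X_t)\ge \min_{x\in R_1}\Lambda_{0,0}(x)$, so $\{\inf_{t\le 1}H_t\le -h\}\subseteq\{\min_{x\in R_1}\Lambda_{0,0}(x)\le -h\}$. Splitting on the event $\{R_1\subseteq[-M,M]\}$ with $M:=\sqrt{h}$, the first piece is bounded by a reflection-principle estimate $\PP(\min_{[-M,M]}\Lambda_{0,0}\le -h)\le C\exp(-h^2/(2M))$, and its complement by $\PP(R_1\not\subseteq[-M,M])\le 2\,\PP(\sup_{s\le 1}X_s\ge M)=\exp(-\Theta(M^3))$ via Proposition~\ref{propo:tailsX1}; both are $\exp(-\Theta(h^{3/2}))$. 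For the positive side, on $\{R_1\subseteq[-M,M]\}$ one has $H_t\le \max_{[-M,M]}\Lambda_{0,0}+\max_x L_1(x)$, so $\{\sup_{t\le1}H_t\ge h\}$ is covered by $\{\max_{[-M,M]}\Lambda_{0,0}\ge h/2\}$, $\{R_1\not\subseteq[-M,M]\}$ and $\{\max_x L_1(x)\ge h/2\}$. The first two are handled as before, and the last is sub-dominant: a local-time peak of height $\ell$ corresponds to a Brownian excursion of that height whose area must fit inside the total area $1$, so by the scaling of Proposition~\ref{propo:resultsBM}-3 its probability is $\exp(-\Theta(\ell^3))$, which after a grid/union bound over positions in $[-M,M]$ remains $\exp(-\Theta(h^3))\ll\exp(-\Theta(h^{3/2}))$.

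For the lower bounds I would exhibit an explicit favorable Brownian-web configuration, in the spirit of the lower bound in Proposition~\ref{propo:tailsX1}. Fix $x:=\sqrt{h}$ and impose independently: (i) that the reflected difference governing the travel has small area, i.e. $\sqrt{2}\int_0^{x}\tilde B_u\,du\le 1$ together with a quick return of this difference to $0$ just after $x$, so that $\sigma_x<1$; by scaling and Proposition~\ref{propo:resultsBM}-1 this costs $\exp(-\Theta(x^3))=\exp(-\Theta(h^{3/2}))$; and (ii) that the initial profile reaches the target level on a neighborhood of $x$, namely $\Lambda_{0,0}\le -2h$ (resp. $\ge 2h$) on a small interval around $x$, which costs $\exp(-\Theta(h^2/x))=\exp(-\Theta(h^{3/2}))$ and is independent of (i) because, by Lemma~\ref{lem:BWalea} and the computation behind \eqref{eqlawT+}, the law of the reflected difference does not depend on $\Lambda_{0,0}$. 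Finally I would constrain the post-arrival evolution to stay in this interval up to time $1$ while accumulating at most $h$ of extra local time (a positive-probability, scale-invariant event), so that $H_1=\Lambda_{0,0}(X_1)+L_1(X_1)$ lies below $-h$ (resp. above $h$). Multiplying the probabilities of these finitely many independent events yields $\PP(H_1\le -h)\ge\exp(-\Theta(h^{3/2}))$, and likewise on the positive side.

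The main difficulty lies in the lower-bound construction rather than in the upper bounds: one must arrange, using \emph{disjoint} portions of the Brownian web so that the constituent events are genuinely independent, that the process sits at the atypically low/high position exactly at time $1$ (not merely at the first hitting time $\sigma_x$), and that the local time gained between $\sigma_x$ and $1$ does not spoil the sign of $H_1$. Making these constraints precise while keeping each factor of order $\exp(-\Theta(h^{3/2}))$ (with the auxiliary fine-tuning events of probability bounded below) is the delicate point, exactly as the four-event decomposition was in the proof of Proposition~\ref{propo:tailsX1}. A secondary technical issue is controlling $\min/\max_{R_1}\Lambda_{0,0}$ uniformly over the random range $R_1$, which the splitting on $\{R_1\subseteq[-M,M]\}$ resolves cleanly.
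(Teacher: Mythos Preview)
Your upper-bound argument on the negative side is correct and is in fact considerably simpler than the paper's. From $H_t=\Gamma_0(X_t)+L_t(X_t)\ge\Gamma_0(X_t)$ you get $\{\inf_{t\le1}H_t\le-h\}\subseteq\{\min_{R_1}\Gamma_0\le-h\}$, and your split on $\{R_1\subset[-\sqrt h,\sqrt h]\}$ then finishes the job with Proposition~\ref{propo:tailsX1} and the reflection principle. The paper instead observes that $\{\inf_{t\le 1}H_t\le-h\}=\{\sigma_{Y_{-h}}\le1\}$ (with $Y_{-h}$ the first hitting of $-h$ by $\Gamma_0$), decomposes $[0,Y_{-h}]$ into i.i.d.\ pieces via the strong Markov property, and applies Cram\'er's theorem; this yields the same $\exp(-c\,h^{3/2})$ but at greater cost. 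Your route is cleaner. On the positive side the two arguments are essentially the same: the paper also restricts to $|X|\le\sqrt h$ and $\Gamma_0\in[-h/4,h/4]$ and then runs exactly your grid/union bound, phrasing the ``local-time peak'' estimate as the fact that the backward BW line from $(x,h)$ would have to drop by $h/4$ in a horizontal span $O(1/h)$.

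For the lower bounds your strategy differs from the paper's, and while the idea is sound, the independence claim for your event (iii) is where the real work lies. Your observation that, by Lemma~\ref{lem:BWalea}, the reflected difference $\Gamma_x-\Gamma_0$ on $[0,x]$ is independent of $\Gamma_0$ is correct, so (i) and (ii) are genuinely independent. But the ``post-arrival'' event (iii)---that $X$ lingers near $x$ during $[\sigma_x,1]$ with bounded local-time gain---is governed by the BW \emph{above} $\Gamma_x$, and while that portion of the web is independent of everything below $\Gamma_x$, the dynamics of $X$ after $\sigma_x$ still depend on the \emph{shape} of $\Gamma_x$, which is fixed by (i) and (ii). So (iii) is not independent of (i)$\cap$(ii); you need a conditional lower bound, uniform over the $\Gamma_x$'s produced by (i)$\cap$(ii). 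The paper sidesteps this by a different decomposition: it forces both $\Gamma_0$ and a nearby backward line to stay in two parallel thin tubes of slope $\pm\sqrt{h}$ on $[0,\sqrt{h+2}]$ (these are the events costing $\exp(-c\,h^{3/2})$), and then imposes three auxiliary events---coalescence of backward lines to the left of $0$, and coalescence/area constraints on forward and backward lines to the right of $\sqrt{h+2}$---that live in disjoint regions of the plane and hence are either independent or have a uniform conditional lower bound. Your construction can certainly be completed in the same spirit (e.g.\ by pinning $\Gamma_0$ and $\Gamma_x$ in tubes so that the post-arrival geometry is controlled deterministically), but as written the ``scale-invariant positive-probability'' claim for (iii) is the gap.
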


We use here two different constants $\eta$ and $\eta'$ to stress that, unlike the case of $X$, 
 the distribution of $H$ is \textbf{not} symmetric (i.e.  the distributions of $-H_1$ and $H_1$ are quite different). See Fig. \ref{figureavechposhneg}.

\begin{figure}[!h]
\centering
\includegraphics[width = 12cm]{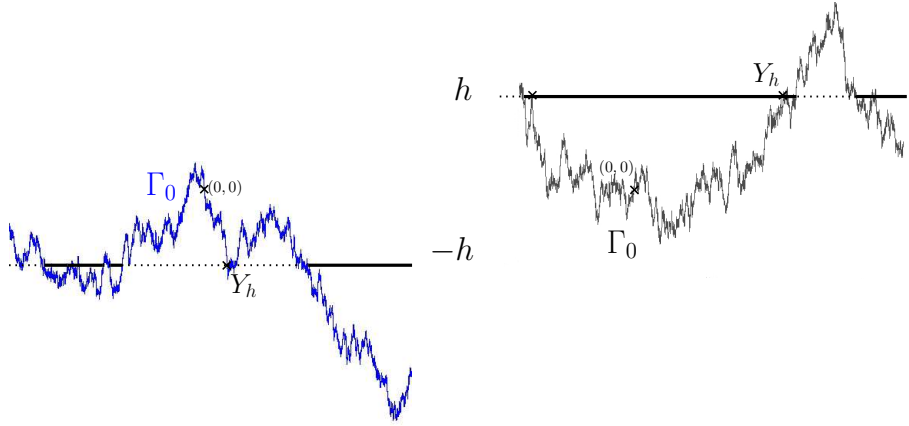}
\caption{The initial configuration $\Gamma_0$ and the lines $-h$ and $h$ (the thick lines represent the possible places $(X_1,H_1)$ where one can have $H_1 <-h$ (on the left hand side) and $H_1 >h$ (on the right hand side))}
\label{figureavechposhneg}
\end{figure}

In fact the derivation of the tail-estimates for $H_1$ are very different than those for $X_1$, because
 the initial profile will now play a key-role. Roughly speaking, the exceptional events that we will focus on will require a combination of a very favorable initial profile $\Gamma_0$
 and a particular behavior of the TSRM between time $0$ and $1$.

The next three subsections are devoted to the proof of Proposition \ref{tailprop}.

\subsection{Lower bounds}

We will first derive the lower bound for the probability that $H_1 \le -h$ and we will in fact focus on the sub-event $\{ H_1 \le -h \hbox { and }X_1 > 0 \}$. 
To guess what configuration to consider, we can imagine that for the initial profile, the random variable
$$Y_{-h} := \inf \{ y \ge 0 \ : \ \Gamma_0 (y) \le -h \} $$
 is exceptionally small. Then, on $[0, Y_{-h}]$, $\Gamma_0$ will at first glance look like a non-horizontal line with negative strong slope $-\alpha$ 
(to be determined), and one can compute the cost for another Brownian motion going backwards and reflected on this slope, starting at the point 
$(h/\alpha, -h)$ in order to create an area less than $1$. 
One has to find a compromise between the cost of creating this initial configuration (which is roughly $P(\tau_{-h} < h/\alpha)$) 
and the cost of creating this small area with this slope. A back-of-the envelope calculation shows that a slope $\alpha$ of the order of $\sqrt{h}$ 
is close to optimal. In other words, we will roughly ask the initial profile $\Gamma_0$ to go down to level $-h$ during the interval $[0, \sqrt h ]$ 
(recall that the ``natural'' Brownian scaling would give an interval of length $h^2$), and then the TSRM to run exceptionally fast down this slope.

More precisely, let us describe the events that we will require to hold (Figures \ref{firstthreeevents}. and \ref{zoomaroundsqrth+2}. can help to quickly see what is going on). Define 
\begin{align*}
 \eps_h := 1/ ( 5 \sqrt {h+2}),
\end{align*}
and the function $f_h(\cdot)$ to be the linear function defined on $[0, \sqrt {h+2}]$ such that 
$f_h(0)= 0$ and $f_h ( \sqrt{h+2}) = - h - 2$. Define $U_h$ to be the tube of vertical width $\eps_h$ around $f_h$, and $V_h$ to be the same tube, but shifted vertically by $2 \eps_h$ so that $V_h$ lies just above $U_h$. In other words,
\begin {eqnarray*}
 U_h & = & \{ (x,l) \ : \ x \in [0, \sqrt {h+2}] \hbox { and } | l  - f_h (x) | < \eps_h \} \\
 V_h & = & \{ (x,l) \ : \ x \in [0, \sqrt {h+2}] \hbox { and } | l  - f_h (x) - 2 \eps_h | < \eps_h \}.
\end {eqnarray*}
Then, we will require that
\begin {itemize}
 \item The initial profile $\Gamma_0$ stays within $U_h$ for all $x \in [0, \sqrt {h+2}]$.
 \item The backward line starting at $(\sqrt {h+2}, -h-2 + 2 \eps_h)$ stays in $V_h$ for all $x \in [0, \sqrt {h+2}]$. 
 \item The backward lines starting at $(0,0)$ and at $(0,1)$ coalesce in such a way that the area between them is less than $1/10$, i.e. $\int_{-\infty}^0 ( \Lambda_{(0,1)} (y) - \Lambda_{(0,0)} (y)) dy \le 1/10$. 
 \item The forward lines starting at $(\sqrt {h+2}, -h-3)$, $(\sqrt {h+2}, -h-1)$ and at $(\sqrt {h+2}, -h - 1/2)$ do coalesce before reaching the height $-h$, and the area between the last two curves is greater than $1$.
 \item The backward line starting at $(\sqrt {h+2}, - h - 1/2)$ and at $(\sqrt {h+2}, \Gamma_0 (\sqrt { h+2}))$  do coalesce before reaching the height $-h$, and in a horizontal time-span smaller than one.
\end {itemize}
\begin{figure}[!h]
\centering
\includegraphics[width = 9cm]{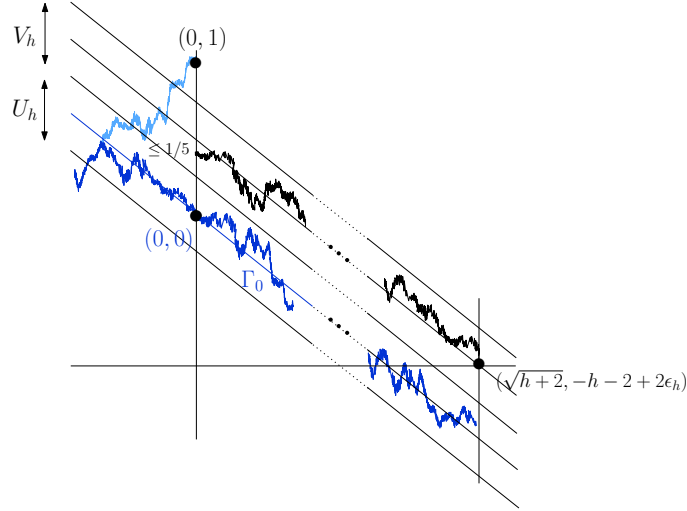}
\caption{Realization of the first three events}
\label{firstthreeevents}
\end{figure}
All these definitions may seem somewhat messy, 
but it is easy to check, just using the monotonicity properties of the Brownian web, 
that if all these events occur, then $H_t$ will hit $-h-2$ before time $1$, and that the process $(X_t, H_t)$ will stay under the horizontal 
line $-h$ for a time-interval of length at least one after this time. In particular, if the five events hold simultaneously, then $H_1 \le -h$.

\begin{figure}
\centering
\includegraphics[width = 7cm]{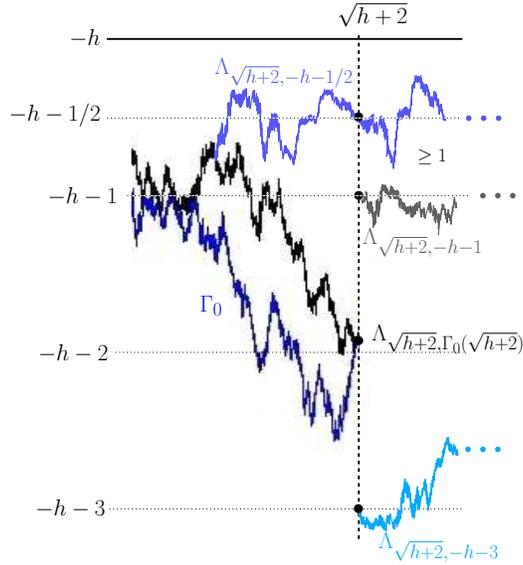}
\caption{Zoom around $t = \sqrt{h+2}$ with the lines involved for the realization of the fourth and fifth events}
\label{zoomaroundsqrth+2}
\end{figure}

The first four events are independent, because they correspond to events dealing with the Brownian web in disjoint domains. The conditional probability 
of the last one given the first four turns out to be bounded from below by a positive constant that does not depend on $h$. Indeed, it is independent on the 
third and fourth events. Moreover, the first and second events imply that the backward line started at $(\sqrt{h+2}, \Gamma_0(\sqrt{h+2}))$ stays in the tube 
$U_h \cup V_h$. 
Therefore the conditional probability is bounded from below by the probability that a standard Brownian motion hits the affine function $-f_h - 2$ before $1/2$ 
which clearly is positive (and bounded from below independently from $h$).

It remains to evaluate the probabilities of the first four events separately. The third and the fourth are positive and independent of $h$. 
The first two probabilities are equal. Note that $(\Gamma_0 (x), x \le \sqrt {h+2})$ is a Brownian motion and therefore 
$\Gamma^B (x):= \Gamma_0 (x) - x \Gamma_0 ( \sqrt {h+2})/\sqrt{h+2}$ is a Brownian bridge independent from $\Gamma_0(\sqrt {h+2})$. 
Furthermore, if 
$$\Gamma_0 (\sqrt {h+2}) \in  [-h-2-\eps_h /2, -h-2 + \eps_h /2 ]$$ and 
$$
\sup_{x \in [0, \sqrt {h+2}]} |\Gamma^B (x)|  \le \eps_h /2,  
$$
then the first event holds. 
The probabilities of each of these two independent events turns out of the type
 $\exp\big(- c h^{3/2}\big)$, which concludes the proof of the lower bound of $\PP (H_1 < -h)$.

\medbreak

The proof of the lower bound for $\PP( H_1 > h)$ is almost identical. The only difference is that the tubes now go upwards instead of downwards, 
and the reader can easily check that the same arguments work.

\subsection{Upper bound for $\PP ( \inf_{s \in [0,1]} H_s < - h)$}

We define again for $l>0$
$$Y_{-l} = \inf \{ y > 0 \ : \ \Gamma_0 (y) \le -l \}, $$
and we simply use $Y$ for $Y_{-h}$. Clearly, by symmetry,
$$ 
\PP \left( \inf_{s \in [0,1]} H_s < -h \right) = 2 \PP ( \sigma_{Y} < 1 ) \le 2 \PP \left( \int_0^{Y} ( \Gamma_{Y} (y) - \Gamma_0 (y)) dy  < 1 \right).
$$
Recall from Lemma \ref{lem:BWalea} that conditionally on $\Gamma_0$, the law of $\Gamma_{Y}$ is that of a backward Brownian motion started at $(Y, \Gamma_0 (Y))$ and reflected on $\Gamma_0$.

Note that Williams decomposition theorem (see for instance chapter $4$ Corollary $(4.6)$ p.~317 in \cite {RevuzYor}) states that the law of $(\Gamma_0 (Y-y) + h, y \in [0, Y])$ is that of a three-dimensional Bessel process up to its last passage time at level $h$.

Recall that by the strong Markov property for the Brownian motion, if one defines (for a given $\epsilon > 0$)
$$ \Gamma^j := ( \Gamma_0 (t+ Y_{-j \epsilon} ) + j \epsilon , t \in [ Y_{-j \epsilon}, Y_{-(j+1) \epsilon}] ) $$
then $\Gamma^0, \Gamma^1, \Gamma^2, \ldots$ are i.i.d. 
Let us choose $\epsilon = c / h^{1/2}$ for some large $c$, and denote by $N$ the integer part of $h/ \epsilon$. 

Monotonicity properties readily imply (by comparing $\Gamma_{Y}$ with the process where at each $Y_{- j \epsilon}$, the Brownian motion has to jump down to the actual location of $\Gamma_0$) that one can compare $\int_0^{Y_{-N \epsilon}} ( \Gamma_{Y} (y) - \Gamma_0 (y)) dy $ with the sum of $N$ i.i.d. copies of 
$\int_0^{Y_{-\epsilon}} ( \Gamma_{Y_{-\epsilon}} (y) - \Gamma_0 (y)) dy$ (the latter being stochastically dominated by the former). 
Hence, it finally suffices to evaluate the probability that the sum of $N$ copies of 
$$ \int_0^{Y_{-\epsilon}} ( \Gamma_{Y_{-\epsilon}} (y) - \Gamma_0 (y)) dy$$ 
is smaller than $1$. By scaling, this is exactly the same as the probability that the sum of $N$ copies of 
$$ \int_0^{Y_{-c}} ( \Gamma_{Y_{-c}} (y) - \Gamma_0 (y)) dy$$ 
is smaller than $h^{3/2}$ (which is smaller than $2cN$). 
Note that if we have chosen $c$ sufficiently large, we made sure (because of scaling) that 
$$ E \left( \int_0^{Y_{-c}} ( \Gamma_{Y_{-c}} (y) - \Gamma_0 (y)) dy \right) = c^{3} E \left( \int_0^{Y_{-1}} ( \Gamma_{Y_{-1}} (y) - \Gamma_0 (y)) dy \right) > 4c  $$
and it therefore follows from the standard 
 Cr\`amer Theorem for sums of i.i.d. positive random variables that for some positive constant $a$,
the probability in question is bounded from above by $ \exp ( -a  h^{3/2})$ for all large $h$, which concludes this part of the proof.
\qed

\subsection{Upper bound for $\PP ( \sup_{s \in [0,1]} H_s >  h)$}
\label{upper bound right tail}

Our goal is now to derive the upper bound for the probability that $H_t$ reaches a large positive $h$
before time $1$. In other words, we want to evaluate the probability that there exists an $x$ for which 
$S_{x,h} \le 1$. 
Note that for symmetry reasons, this probability is bounded from above by twice the probability that there exists a positive $x$ for which  $S_{x,h} \le 1$.

Note that the situation is different than in the previous section. Indeed, for $H_t$ to be negative before time $1$, the strategy had to be to find quickly a position where the initial profile was negative. Here, it could a priori happen that the $H_t$ is very large just because the TSRM spent some time in a tiny interval. So, the position at which this can happen is not a priori prescribed (see Figures \ref{diffconfigpourhpos} and \ref{fig:possiblepositions}).

\begin{figure}[!h]
\centering
\includegraphics[width = 10cm]{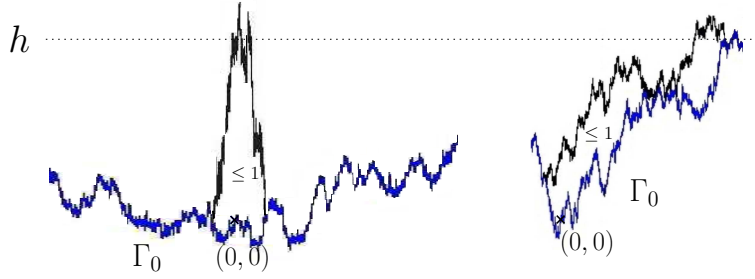}
\caption{Possible configurations for $H_1 > h$}
\label{diffconfigpourhpos}
\end{figure}

\begin{figure}[!h]
\centering
\includegraphics[width = 10cm]{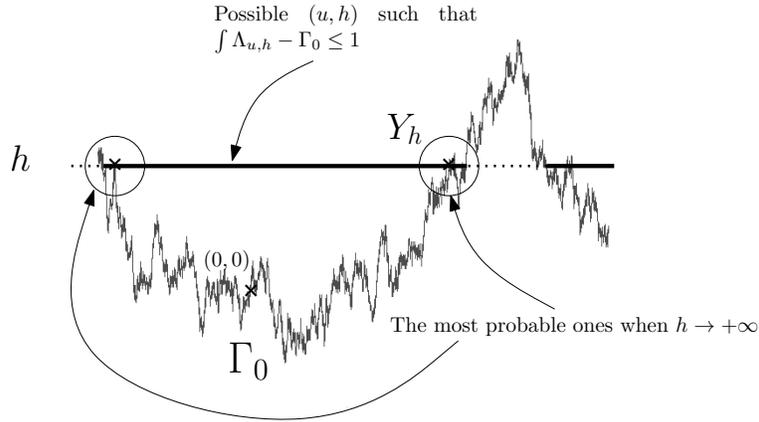}
\caption{The initial local time with the possible positions for $X$ when $H$ first hits the level $h$}
\label{fig:possiblepositions}
\end{figure}

Recall from our earlier estimates that the probability that the TSRM $X$ reaches $\sqrt {h}$ before time $1$ is bounded by $\exp ( -\kappa h^{3/2})$. It will therefore be sufficient to evaluate 
$$\PP ( \exists x \in [0, \sqrt {h} ] , \ S_{x,h} \le 1 ).$$

Also, it is easy to check that the probability that 
$ \Gamma [-\sqrt{h}, \sqrt{h} ] \notin [-h/4, h/4 ]$ 
is also very small, and bounded from above by $\exp ( - c h^{3/2})$ for some constant $c$ and all large $h$.

It remains to bound the probability that $ \Gamma [-\sqrt{h}, \sqrt{h} ] \in [-h/4, h/4 ]$ and
 $S_{x,h} \le 1$ for some $x \in [0, \sqrt {h}]$. In fact, we shall see that it is smaller than $\exp (-c h^3)$ for some constant $c$.  

Indeed, if this holds for some $x \in [0, \sqrt {h}]$, it means that the backward line in the BW starting from $(x,h)$ has to hit level $h/2$ in the interval $[x- 4/h, x]$. Indeed, otherwise, the domain in-between the initial profile and this backward line would contain a rectangle with area $(4/h) \times (h/4) =1$.

\begin{figure}[!h]
\centering
\includegraphics[width = 7cm]{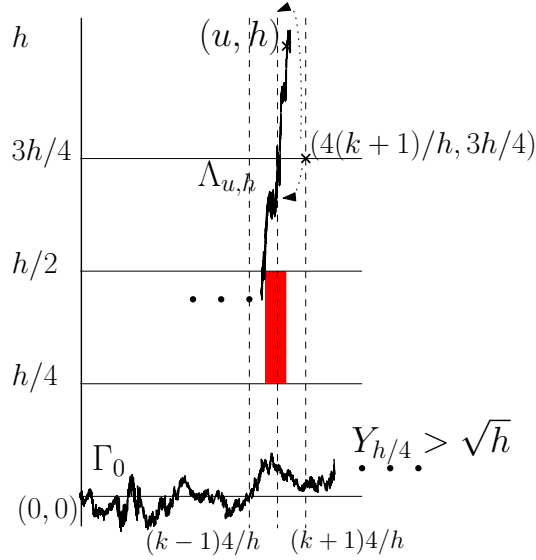}
\caption{Representation of $(u,h)$ verifying $S_{u,h} \le 1$}
\label{fig:repruh}
\end{figure}
Let us now suppose that for some $x \in [0, \sqrt {h}]$, the backward line in the BW starting from $(x,h)$ has to hit level $h/2$ in the interval $[x- 4/h, x]$.
Let us define $j$ to be the smallest integer such that $\tilde x := 4j/h \ge x$. Then, the backwards line starting from 
$(\tilde x, 3h/4)$ has to either hit $h$ or $h/2$ in the interval $[\tilde x - 2/h, \tilde x]$ (indeed, if it stays in the interval $[h/2,h]$, it would coalesce with the backward line starting from $(x,h)$ and therefore hit $h/2$). See Fig. \ref{fig:repruh}.

The probability that a Brownian motion started from the origin hits level $h/4$ before time $2/h$ decays very fast when $h \to \infty$ (a possible upper bound is of the type $\exp ( - c h^3)$). Note that there are of the order of $\sqrt {h} \times h / 2$ possibilities for $\tilde x$.

Putting the pieces together, we obtain an upper bound of the type
$$\PP ( \Gamma [-\sqrt{h}, \sqrt{h} ] \in [-h/4, h/4 ] \hbox { and } \exists x \in [0, \sqrt {h} ] , \ S_{x,h} \le 1 )
\le C' h^{3/2} e^{-ch^3}.$$
The upper bound for 
$\PP ( \sup_{s \in [0,1]} H_s >  h)$ follows.
\qed

\subsection{Flat initial condition}

It is worthwhile to note that for the flat initial conditions i.e. when $\Gamma_0=0$, the situation is completely different.
 Indeed, clearly $(H_t)$ is a non-negative process (so that there is no tail on the negative side...),
 and it is not possible to use a  ``favorable'' initial profile to help constructing an event where $H_1$ becomes very large.
In fact, the decay rate of the probability that $H_1$ is large is very different: 
\begin{propo}
When $h \to \infty$,
$\PP(H_1 \ge h) = \exp\left(-8h^3/9  + O(\ln(h))\right)$
\end{propo}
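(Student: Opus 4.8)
The plan is to reduce both bounds to the single–position area estimate already contained in~(\ref{U1+U2}). Since here $\Gamma_0\equiv 0$, the area and the local–time profile coincide: $S_{x,h}=\int_{\R}\Lambda_{x,h}(y)\,dy$, and for a \emph{fixed} $x$ the curve $\Lambda_{x,h}$ is, by the same reasoning as in Lemma~\ref{lem:BWalea}, a two–sided Brownian motion issued from $(x,h)$ and absorbed on the line $0$ (there is no factor $\sqrt2$ now, exactly as in the flat remark following Proposition~\ref{propo:tailsX1}). The scaling behind Proposition~\ref{propo:resultsBM}-3 then shows that its area is distributed as $h^{3}(U_1+U_2)$, so that for each fixed $x$,
\begin{align*}
\PP\big(S_{x,h}\le 1\big)=\PP\big(U_1+U_2\le h^{-3}\big)=\exp\!\big(-8h^{3}/9+O(\ln h)\big)
\end{align*}
by~(\ref{U1+U2}). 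Finally, the plane–filling description gives $\{\sup_{s\le1}H_s\ge h\}=\{\max_{x}L_1(x)\ge h\}=\{\exists\,x:\ S_{x,h}\le 1\}$; combined with $\PP(H_1\ge h)\le\PP(\sup_{s\le1}H_s\ge h)$ this reduces the upper bound to a union over positions of the displayed single–position estimate, and suggests fixing one position for the lower bound.

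For the lower bound I would fix $x=0$ and force one narrow spike. The main event is that $\Lambda_{0,h}$ reaches $h$ at $0$ with area at most $1-\rho_h$, where $\rho_h:=C\ln h/h^3$; applying the fixed–$x$ estimate with threshold $1-\rho_h$ costs $\exp(-\tfrac89\,h^3/(1-\rho_h)+O(\ln h))=\exp(-8h^3/9+O(\ln h))$, the correction being harmless precisely because $\rho_h h^3=O(\ln h)$. On a disjoint (hence independent) portion of the web lying above this profile I would then require a thin finger rising above level $h$, of area equal to the leftover $1-S_{0,h}\in[\rho_h,2\rho_h]$ and shaped so that the contour climbs it and sits at its tip exactly when the swept area reaches $1$; then $X_1$ lies inside the spike and $H_1=L_1(X_1)\ge h$. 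This auxiliary event has only polynomially small probability, so it is absorbed in the $O(\ln h)$ term.

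For the upper bound I would estimate $\PP(\exists\,x: S_{x,h}\le1)$ by a union over a grid, splitting positions into a near and a far region. In the near region $|x|\le\sqrt h$ I take a grid of spacing $\delta$ much smaller than the typical spike width ($\sim1/h$); using non–crossing of the web and the fast coalescence of profiles issued at the same height from neighbouring points, the event ``some $x$ in a cell has $\mathrm{area}(\Lambda_{x,h})\le1$'' is contained, off a negligible set, in the single–grid–point event $\{\mathrm{area}(\Lambda_{x_j,h})\le1+\eps_h\}$ with $\eps_h=O(\ln h/h^3)$; its probability is still $\exp(-8h^3/9+O(\ln h))$, and the polynomially many cells contribute only a factor $\exp(O(\ln h))$. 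In the far region $|x|>\sqrt h$ I use $S_{x,h}\ge\sigma_x$ together with the independence of the traversal up to $x$ and the spike–building above it: by the flat analogue of Proposition~\ref{propo:tailsX1},
\begin{align*}
\PP\big(S_{x,h}\le 1\big)\le\PP(\sigma_x\le1)\,\exp(-8h^3/9+O(\ln h))\le\exp\!\big(-\kappa x^{3}-8h^{3}/9+O(\ln(xh))\big),
\end{align*}
and summing $\exp(-\kappa x^3)$ over a grid of far positions yields a convergent, negligible contribution. Putting the two regions together gives $\PP(H_1\ge h)\le\exp(-8h^3/9+O(\ln h))$.

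The delicate point, and the one I expect to be the main obstacle, is the \emph{sharpness of the constant} in the upper bound: the union over positions must not spoil the value $8/9$. This forces the area–threshold slack $\eps_h$ in the near–region reduction to be only of order $\ln h/h^3$, which in turn requires controlling the area trapped between two web curves issued from $(x,h)$ and $(x',h)$ at this very fine scale—including the atypically large values of that trapped area, which dictate how fine the grid must be chosen. Getting this modulus–of–continuity estimate quantitatively right, rather than merely $o(1)$, is the crux of the argument; the lower bound's only subtlety, by contrast, is the time–exactly–one requirement, handled by the small slack $\rho_h$ and the tuned finger.
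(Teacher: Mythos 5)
Your reduction to the single--position estimate is exactly the paper's starting point: for fixed $x$ the curve $\Lambda'_{x,h}$ is a two-sided Brownian motion from $(x,h)$ absorbed at $0$, its area is $h^3(U_1+U_2)$, and (\ref{U1+U2}) gives $\exp(-8h^3/9+O(\ln h))$. Your lower bound is also essentially the paper's: pin the area under the line through $(0,h+O(h^{-2}))$ into a narrow window just below $1$ (two applications of Proposition \ref{propo:resultsBM}-3, one per side, each costing $\exp(-4h^3/9+O(\ln h))$), and use an independent, constant-probability ``finger'' --- in the paper, the strip between the lines through $(0,h+1/h^2)$ and $(0,h+5/h^2)$, which coalesce quickly while staying above $h$ and enclose an area larger than the window width --- to guarantee that time $1$ falls in the interval during which $H\ge h$. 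You should state the window explicitly rather than ``area at most $1-\rho_h$'', since a one-sided bound plus a finger of ``area equal to the leftover'' does not by itself locate time $1$; but this is the same mechanism and is fixable.

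The genuine gap is in the upper bound, at precisely the step you flag as the crux. Your plan compares $\Lambda'_{y,h}$ for $y$ in a cell to the curve from a grid point \emph{at the same height} $h$, and needs the trapped area between them to be at most $\eps_h=O(\ln h/h^3)$ except on a negligible set. That estimate fails as a union bound: the area trapped between two coalescing curves started at height $h$ from points at distance $\delta$ is, by Proposition \ref{propo:resultsBM}-3 and scaling, of order $w^3V$ with $w\sim\sqrt\delta$ and $P(V\ge A)\sim cA^{-1/3}$, so $P(\hbox{trapped area}\ge \ln h/h^3)\approx c\sqrt\delta\,h$, and summing over the $\sim\sqrt h/\delta$ cells gives $\sim h^{3/2}\delta^{-1/2}\to\infty$ for every choice of $\delta$. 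The heavy tail of the coalescence area cannot be beaten by refining the grid, and salvaging the argument would require intersecting with the rare event itself. The paper sidesteps this entirely by comparing instead to the curve started at the grid point at the \emph{slightly lower} height $h-1/h^2$: on the event that $\Lambda'_{y,h}$ stays above $h-1/h^2$ over the cell $I_k$ (of length $h^{-9}$; the complement costs $\exp(-ch^5)$ via the forward and backward lines from the cell endpoints), non-crossing forces $\Lambda'_{x_{k+1},h-1/h^2}$ to lie \emph{entirely below} $\Lambda'_{y,h}$, so its area is $\le 1$ with no slack and no area-continuity estimate at all; the height shift changes $h^3$ only by $O(1)$, absorbed in $O(\ln h)$. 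Your far-region product bound $\PP(S_{x,h}\le1)\le\PP(\sigma_x\le1)\exp(-8h^3/9+\cdots)$ is also unjustified as stated (conditionally on the profile $\Gamma_x$, the spike above it is reflected/coalescing on a random curve, not a free two-sided Brownian motion from height $h$); the paper avoids it by cutting off at $|x|\le Ch$ with $C$ so large that $\PP(\sup_{s\le1}X_s\ge Ch)\le\exp(-\kappa C^3h^3)\le\exp(-8h^3/9)$.
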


\begin{proof}
Let us start with the lower bound. Let us study the stopping times 
$S_{0, h_1}$ and $S_{0, h_2}$ for $h_1 = h + 1/h^2$ and $h_2= h + 5/h^2$ by $(X,H)$.
We would like to find an event that ensures that $S_{0, h_1} <1$ and that $H_t$ remains above $h$ during a time at least $1$ after this moment.
We will consider the following four events (here $\Lambda_1$ and $\Lambda_2$ denote the BW lines that go through $(0,h_1)$ and $(0,h_2$)):
\begin{itemize}
 \item $\Lambda_1 [-2/h^4,2/h^4] \subset [h+1/(2h^2),h+3/(2h^2)]$.
 \item $\Lambda_2 [-1/h^4,1/h^4] \subset [h+9/(2h^2),h+11/(2h^2)]$.
\item $\Lambda_1$ and $\Lambda_2$ coalesce in the vertical strip above  $[1/h^4, 2/h^4]$ 
and in the vertical strip above $[-2/h^4, -1/h^4]$
 (combined with the previous conditions, this implies that the area between $\Lambda_1$ and $\Lambda_2$ is greater than $6/h^6$ and that $H_t \ge h$ during the corresponding time-interval $[S_{0,h_1},S_{0,h_2}]$).
 \item The integral of $\Lambda_1$ on the interval $[2/h^4,+\infty)$ and on the interval $(-\infty,-2/h^4]$ both belong to $[1/2-2/h^3-3/h^6,1/2-2/h^3-2/h^6]$.
\end{itemize}
 
It is easy to see that $H_1 \ge h$ if those events hold simultaneously. Scaling shows that the probability that the first three are satisfied simultaneously is a constant that does not depend on $h$.
Using the simple Markov property, conditionally on the first events, $(\Lambda_1(2/h^2 + u), \, u \ge 0)$ is a Brownian motion starting at some level in $[1/(2h^2),3/(2h^2)]$. With the expression of the density of the area under a Brownian motion until its first passage time at $0$ given in Proposition \ref{propo:resultsBM}-3, we have:
\begin{align*}
 P_{h+u/h^2}\left(\int_0^{\tau_0} B_t dt \in [1/2-2/h^3-3/h^6,1/2-2/h^3-2/h^6]\right) \ge \exp\left(-4 h^3/9 + O(\ln(h))\right) 
\end{align*}
which is valid uniformly for every $u \in [1/2,3/2]$.

\noindent Therefore,
$$
\PP(H_1 \ge h) 
\ge \left(\exp\left(-4h^3/9 + O(\ln(h))\right)\right)^2 
= \exp\left(-8 h^3/9 + O(\ln(h))\right)
$$

For the upper bound, we can adapt the proof of the corresponding bound in the stationary case. The situation is at first sight simpler here, because we do not have to worry about the initial line. 

Let us denote the Brownian web with flat initial data by $(\Lambda'_{x,h}, (x,h) \in \R\times \R_+^*)$  and $S'_{x,h}$ the integral of $\Lambda'_{x,h}(\cdot)$ over $\R$. First, notice that symmetry and the tail estimates for $X_1$ show that it is sufficient to find an upper bound for
\begin{align*}
  \PP(\exists y \in [0, Ch] \,:\, S'_{y,h} \le 1)
\end{align*}
for some given large enough $C$.

We now divide the interval $[0, Ch]$ into circa $Ch^{10}$ smaller intervals $I_k := [x_k, x_{k+1}] = [k/h^{9},(k+1)/h^{9}]$ and we wish to bound  $\PP(y \in I_k,\, S'_{y,h} \le 1)$ for each $k$. 

We are going to consider two cases depending on whether $\Lambda'_{y,h} ( I_k) \subset  [ h-1/h^2, \infty)$ or not:

\begin{itemize}
 \item If $\Lambda'_{y,h} ( I_k) \subset  [ h-1/h^2, \infty)$, then $\Lambda'_{x_{k+1}, h-1/h^2}$ is below $\Lambda'_{y,h}$, so that $S'_{x_{k+1}, h-1/h^2} \le 1$. 
 \item If $\Lambda'_{y,h} ( I_k) \not\subset  [ h-1/h^2, \infty)$, then either the backward line started at $(x_{k+1}, h - 1 / (2h^2)$ or the forward line started at $(x_k, h -1/(2h^2))$ does not stay in $[h - 1/h^2, h )$ during the interval $I_k$. 
\end{itemize}

The probability of the second case is very small, and can be bounded by a constant times $ \exp ( -Ch^6)$. 
The probability of the first case is bounded by the probability that the area under a two-sided Brownian motion starting at the level $h-1/h^2$ until the first hitting times of $0$ (on both sides) is less than $1$. One can then conclude 
 using the estimate (\ref {U1+U2}), and summing over the $Ch^{10}$ values of $k$ (that correspond to another $e^{O(\log (h))}$ term).
\end{proof}

\subsection {Almost sure fluctuations}\label{sec:loi01}

Our tail-estimates for $H$ are less precise than those we obtained for $X$. However, let us say a few words on how to nevertheless 
deduce information about the almost sure behavior of the process $(H_t, t \ge 0)$:
\begin{corollary}\label{corol:LILH}
There exists four constants $l^+ > 0$, $l^+ _0> 0$, $l^- < 0$ and $l^-_0 < 0$ such that
\begin{align*}
\begin{array}{l}
\limsup_{t \to \infty} t^{-1/3} (\ln\ln(t))^{-2/3} \, H_t = l^+ \\
\liminf_{t \to \infty} t^{-1/3} (\ln\ln(t))^{-2/3} \, H_t = l^- \\
\limsup_{t \to 0} t^{-1/3} (\ln\ln(1/t))^{-2/3} \, H_t = l^+_0 \\
\liminf_{t \to 0} t^{-1/3} (\ln\ln(1/t))^{-2/3} \, H_t = l^-_0.
\end{array}
\end{align*}
\end{corollary}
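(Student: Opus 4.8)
The plan is to pin down each of the four quantities by trapping it between two deterministic, non-zero and finite constants through a pair of Borel--Cantelli arguments, and then to upgrade this to an almost sure \emph{equality} by means of a scaling $0$--$1$ law. All four identities are handled by one and the same scheme. The two statements at $t\to\infty$ and at $t\to 0$ differ only in that one runs the argument along a geometric sequence of times with ratio $\lambda>1$ or $\lambda\in(0,1)$ respectively, exactly as in the passage from large to small times at the end of the proof of Proposition \ref{propo:LIL}; and the $\limsup$ (respectively $\liminf$) statements use the right tail $\PP(H_1\ge h)$ and the bound on $\PP(\sup_{s\in[0,1]}H_s\ge h)$ (respectively the left tail $\PP(H_1\le -h)$ and the bound on $\PP(\inf_{s\in[0,1]}H_s\le -h)$) from Proposition \ref{tailprop}.

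For the upper bounds --- that $l^+,l^+_0<\infty$ and $l^-,l^-_0>-\infty$ --- I would mimic the first Borel--Cantelli step of the upper bound in the LIL for $X$. Fixing $\lambda>1$ and using the scaling property $\sup_{s\in[0,\lambda^n]}H_s\stackrel{(d)}{=}\lambda^{n/3}\sup_{s\in[0,1]}H_s$, the right-tail estimate of Proposition \ref{tailprop} gives
\[
\PP\Big(\sup_{s\in[0,\lambda^n]}H_s\ge C\,\lambda^{n/3}(\ln\ln\lambda^n)^{2/3}\Big)=\PP\Big(\sup_{s\in[0,1]}H_s\ge C(\ln\ln\lambda^n)^{2/3}\Big)\le\exp\Big(-\tfrac{C^{3/2}}{\eta'}\ln\ln\lambda^n\Big),
\]
which is summable in $n$ once $C$ is large, since $\ln\ln\lambda^n\sim\ln n$ (the matching of exponents $\tfrac{2}{3}\cdot\tfrac{3}{2}=1$ is exactly why the normalisation $t^{1/3}(\ln\ln t)^{2/3}$ is the correct one). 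By Borel--Cantelli this forces $H_t\le C\,t^{1/3}(\ln\ln t)^{2/3}$ for all large $t$ almost surely, bounding $l^+$ from above; the symmetric computation with the $\inf$-bound and the constant $\eta$ bounds $l^-$ from below.

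For the lower bounds --- that $l^+,l^+_0>0$ and $l^-,l^-_0<0$ --- I would reproduce the independent-events construction of the lower bound of Proposition \ref{propo:LIL}, but with the area events replaced by the favorable-profile (tube) events of the lower bound of Proposition \ref{tailprop}. Along the scales $\lambda_n$ one localises, in the pairwise disjoint portions $L_n\setminus L_{n-1}$ of the Brownian web, an event that forces a favorable initial profile together with a fast excursion of the height, so that $H$ reaches a level of order $t^{1/3}(\ln\ln t)^{2/3}$ in the corresponding time-window. Because each such event is measurable with respect to a distinct region of the web, they are independent, and the lower tail estimate of Proposition \ref{tailprop} makes their probabilities sum to infinity once the constant is chosen small enough. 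The second Borel--Cantelli lemma then produces infinitely many such times, so the $\limsup$ is almost surely bounded below by a positive constant; the mirror construction with downward tubes yields the corresponding negative bound for the $\liminf$.

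These two steps only confine each functional to a deterministic interval $[C_{\mathrm{low}},C_{\mathrm{up}}]$ with $0<C_{\mathrm{low}}<C_{\mathrm{up}}<\infty$, because the estimates of Proposition \ref{tailprop} are sharp only up to the constants $\eta,\eta'$; unlike for $X$, this does not identify the value. To conclude that each quantity is almost surely a single constant I would invoke a $0$--$1$ law based on scaling: the maps $\phi_a:(X_\cdot,H_\cdot)\mapsto(a^{-2/3}X_{a\cdot},a^{-1/3}H_{a\cdot})$ preserve the law of the process, and a direct substitution shows that each of the four functionals is invariant under every $\phi_a$ (the slowly varying factor $\ln\ln$ being unchanged in the limit). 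Hence each functional is a measurable invariant of the measure-preserving flow $(\phi_{e^u})_{u\in\R}$, and is therefore almost surely constant as soon as this flow is ergodic. I expect establishing that ergodicity --- equivalently a suitable mixing property of the Brownian web under scaling --- to be the main obstacle, precisely because $H$ has no independent increments and the elementary tail $0$--$1$ laws do not apply; once it is granted, the sandwich from the first two steps gives $l^+,l^+_0\in(0,\infty)$ and $l^-,l^-_0\in(-\infty,0)$ as the common a.s.\ values.
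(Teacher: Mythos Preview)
Your upper- and lower-bound steps via Borel--Cantelli are exactly what the paper does, and your outline of the lower bound (independent events localised in disjoint annuli $L'_n\setminus L'_{n-1}$ of the web, built from the tube construction of the lower bound in Proposition~\ref{tailprop}) matches the paper's sketch closely.

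The genuine divergence is in the $0$--$1$ step. You propose to deduce that each functional is a.s.\ constant from ergodicity of the scaling flow $(\phi_{e^u})_{u\in\R}$, and you correctly flag that proving this ergodicity is the outstanding obstacle. The paper avoids this obstacle entirely and uses a different, more elementary $0$--$1$ argument tailored to the Brownian web. Concretely: let $\rho_h$ be the first time $L_t(0)$ reaches $h$, and $\mathcal G_h$ the $\sigma$-field generated by the web above $\Lambda_{(0,h)}$. Since $\rho_h\to\infty$, the functional $Z=\limsup_{t\to\infty}H_t/(t^{1/3}(\ln\ln t)^{2/3})$ is $\mathcal G_h$-measurable for every $h$, hence $\mathcal G_\infty$-measurable. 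On the other hand, for fixed $N$, any event in $\mathcal V_N$ (the $\sigma$-field generated by $(X,H)$ up to its exit from $[-N,N]^2$) depends only on the web inside $[-N,N]^2$; and for large $h$ the curve $\Lambda_{(0,h)}$ avoids $[-N,N]^2$ with probability close to $1$. This makes any $\sigma(Z)$-event independent of each $\mathcal V_N$, hence of $\sigma(H_t,t\ge0)$, hence of itself. The $t\to0$ case is the mirror argument with $h\to0$ and boxes $[-1/N,1/N]^2$.

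So your plan is incomplete as it stands: the scaling-ergodicity route you propose is not carried out, and it is not clear it is any easier than what you are trying to prove. The paper's web-filtration argument is a drop-in replacement for your third step that requires no new input beyond what is already available.
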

Carefully adapting the proofs that we presented for $X$, using our tail estimates for the process $H$ yields statements of the following type: There exists two positive finite constants $\tilde l$ and $\hat l$ such that almost surely 
$$ \hat l \le \limsup_{t \to \infty} t^{-1/3} (\ln\ln(t))^{-2/3} \, H_t  \le \tilde l . $$
The upper bound is a direct consequence of the Borel-Cantelli Lemma, and the lower bound is obtained as in the case of $X$ by considering event that are measurable with respect to the information provided by Brownian web restricted to disjoint domains.
Let us briefly give the outline of the proof. As it is very similar to the fluctuations of $X$, we omit the details and just outline the proof:

Let us choose a sequence $(\lambda_n)$ increasing fast, but not too fast either ($\lambda_n := \lambda^{n(n-1)}$ with $\lambda > 1$ is suitable). It suffices to prove that there exists some absolute constant $c >0$ such that almost surely, the process $H$ reaches the height $\lambda_n$ before time $c \,\lambda_n^3/(\ln(\lambda_n))^2$ for infinitely many values of $n$. For each given $n$, we will focus on the first time at which the TSRM $X$ reaches the position $Y_{\lambda_n} := \inf\{y\ge 0\;:\; \Gamma_0(y) = \lambda_n\}$. Clearly, at that random time $\sigma_{Y_{\lambda_n}}$, the height $H$ is equal to $\lambda_n$.

Set $l_n := \lambda_n^2/\ln(n)$ and consider the boxes $L'_n := [-l_n, l_n] \times [-2\lambda_n, 2\lambda_n]$. It is easy to see via the  Borel-Cantelli Lemma that almost surely, for all but finitely many $n$, 
the event $$\mathcal{D}'_n := \{\Gamma_0 [-l_n, l_n, ]  \subset [ -2 \lambda_n, \lambda_n ]  \}$$
does hold. 

We introduce also $\xi_n := \alpha \, \lambda_n/\ln(n)$. As for the proof of the lower bound, we define two parallel upwards-going  tubes  $U_n$ and $V_n$ such that the bottom line of $U_n$ is the segment joining the points $(l_{n-1},-\xi_n + 2\lambda_{n-1})$ and $(l_n,\lambda_n)$ and the vertical width of $U_n$ is $\xi_n$. The tube $V_n$ is simply the same tube as $U_n$ but translated vertically by $\xi_n$.  
We consider the following three BW-curves: $F^-_n$ the BW-curve starting from $(l_{n-1},-\xi_n/2)$, $F^+_n$ the backward BW-curve starting from $(l_n,\lambda_n + 2 \xi_n)$ and $G^+_n$ the backward BW-curve starting at $(l_{n-1},2 \xi_n)$.
We will now study $\mathcal{A}'_n$ that the following events occurs:
\begin{itemize}
 \item $F^-_n$ stays in the tube $U_n$ and $F^+_n$ stays in the tube $V_n$.
 \item The integral of $G^+_n - F_n^-$ over $(-\infty,l_{n-1}]$ is less than $\xi_n^3$ and $G^+_n$ and $F_n^-$ do not enter in $L'_{n-1}$.
\end{itemize}
Notice that the events $\mathcal{A}'_n$ depend only on BW-curves in $L'_n \backslash L'_{n-1}$ and are therefore independent. Note also that the first event in $\mathcal{A}'_n$ is independent of the second one. The probability of the second event is bounded below by a constant. A similar computation to the proof of the lower bound permits to deal with the first one and shows that the series $\sum \PP(\mathcal{A}'_n)$ diverges. Thus almost surely $\mathcal{A}'_n$ holds infinitely often. Moreover, the values of the sequences $\xi_n$ and $l_n$ and BW monotonicity imply that $\mathcal{A}'_n \cap \mathcal{D}'_{n-1}$ is a sub-event of $\sigma_{Y_{\lambda_n}} \leq c \,\lambda_n^3/(\ln(\lambda_n))^2$ for some $c >0$ which does not depend on $n$. It proves the desired bound.

\bigskip

Let us now describe how to use a $0-1$ type argument in order to conclude.
We want for instance to show that
$$ Z := \limsup_{t \to \infty} \frac {H_t}{t^{1/3} (\ln \ln t)^{2/3}}
$$
is 
almost surely constant (the previous estimates then show that this constant is positive and finite).

Consider for any positive $h$, the curve $\Lambda_{(0,h)}$ started at height $h$ on the vertical axis. It is the profile at the stopping time corresponding to the first time at which $L_t (0)$ reaches $h$. Let us denote this random time by $\rho_h$. We know that $\rho_h \to \infty$ almost surely as $h \to \infty$. 

For all $h>0$, we denote by ${\mathcal G}_h$ the $\sigma$-field that contains all the information about the Brownian web above the line $\Lambda_{(0,h)}$. In other words, it is the $\sigma$-field generated by this line and by $((X_{t+ \rho_h}, H_{t+ \rho_h}), t \ge 0)$. Note that 
$Z$ is therefore ${\mathcal G}_h$ measurable (for all $h >0$). As ${\mathcal G}_h$ is decreasing with $h$, it follows that $Z$ is measurable with respect to ${\mathcal G}_\infty := \cap_h {\mathcal G}_h$.

For all positive $N$, let us now denote ${\mathcal V}_N$ the $\sigma$-field generated by the process $(X,H)$ up to the first time at which $\max (|X|, |H|)$ reaches $N$. 
Clearly, this stopping time is almost surely finite and when $N \to \infty$, it converges almost surely to $\infty$ because $(X,H)$ is a continuous process. 
Furthermore, any event in ${\mathcal V}_N$ can be read off by looking at the Brownian web lines inside the square $A_N= [-N,N]^2$. 

Suppose that $N$ is fixed, that $U$ is a $\sigma(Z)$-measurable event, and that $V$ is ${\mathcal V}_N$ measurable. Suppose furthermore that $W_{h,N}$ is the event that 
the line $\Lambda_{(0,h)}$ does not intersect $[-N, N]^2$. Clearly, the events $W_{h,N} \cap U$ and $V$ are independent as the former can be read off by looking only at the Brownian web outside of $[-N,N]^2$. On the other hand, we know that $P (W_{h,N}) \to 1$ as $h \to \infty$. Hence, 
$$P ( U \cap V ) = \lim_{h \to \infty} P ( U \cap V \cap W_{h,N} )  = P (V) \lim_{h \to \infty} P (U \cap W_{h,N}) =  P (U) P(V). $$
It follows that $U$ is independent of the $\sigma$-field generated by $\cup_N {\mathcal V}_N$, that contains $\sigma ( H_t, t \ge 0)$ and therefore also $U$. 
Hence, $P( U) = 0$ or $P (U)=1$.

\medbreak

The proof  of the fact that 
$$
Z' := \limsup_{t \to 0} \frac {H_t }{ t^{1/3} (\ln \ln (1/t))^{2/3}}$$ is almost surely constant is similar. We know that almost surely $\rho_h \to 0$ as $h \to 0$, and that $H$ is continuous. It follows that the process $(H_t, t \ge 0)$ is measurable with respect to $\sigma ( \cup_h {\mathcal G}_h)$. But for any fixed $h_0 > 0$, the probability that $\Lambda_{(0,h_0)}$ intersects the box $[-1/N, 1/N]^2$ 
goes to $0$ as $N \to \infty$, and on the other hand, we know that $Z'$ is measurable with respect to each ${\mathcal V}_{1/N}$ (because $\rho_{1/N} > 0$). Hence, it follows readily that $Z'$ is independent of ${\mathcal G}_{h_0}$, and then, letting $h_0 \to 0$ that the random variable $Z'$ is independent of itself and therefore constant. 
\bibliographystyle{plain}
\bibliography{bibliotsrm}

D\'epartement de Math\'ematiques et Applications

Ecole Normale Sup\'erieure 

75230 Paris cedex 05

France 

\medbreak

laure.dumaz@ens.fr

\end{document}